\documentclass[a4paper,12pt]{amsart}

\usepackage{float}
\usepackage{euscript,eufrak,verbatim}
\usepackage{graphicx}
\usepackage[usenames]{color}
\usepackage[colorlinks,linkcolor=red,anchorcolor=blue,citecolor=blue]{hyperref}
\usepackage{amsmath}
\usepackage{amsthm}
\usepackage[all]{xy}
\usepackage{graphicx} 
\usepackage{amssymb}

\newtheorem{thm}{Theorem}[section]
\newtheorem{prop}[thm]{Proposition}

\newtheorem{lem}[thm]{Lemma}

\newtheorem{ex}[thm]{Example}

\def\N{\mathbb{N}}
\def\Z{\mathbb{Z}}
\def\N{\mathbb{N}}

\def\hh{\underline{h}}

\def\LL{\mathcal{L}}
\def\MM{\mathcal{M}}
\def\XX{\mathcal{X}}
\def\PP{\mathcal{P}}
\def\EE{\mathcal{E}}
\def\UU{\mathcal{U}}
\def\NN{\mathcal{N}}

\def\diam{\text{\rm diam}}
\def\top{\text{\rm top}}
\def\Leb{\text{\rm Leb}}
\def\mdim{\text{\rm mdim}}

\def\mdim{\text{\rm mdim}}

\makeatletter % `@' now normal "letter"
\@addtoreset{equation}{section}
%\makeatother  % `@' is restored as "non-letter"
%\renewcommand\theequation{\oldstylenums{\thesection}%
%	.\oldstylenums{\arabic{equation}}}
\numberwithin{equation}{section}

%opening
\title[On variational principles for metric mean dimension]{On variational principles for metric mean dimension}
\author{Ruxi Shi
}
\address%[authorlabel1]
{Institute of Mathematics, Polish Academy of Sciences, ul. \'Sniadeckich 8, 00-656 Warszawa, Poland}
\email{rshi@impan.pl}
\begin{document}
	
	\maketitle

\begin{abstract}
In this note, we show several variational principles for metric mean dimension. First we prove a variational principles in terms of Shapira's entropy related to finite open covers. Second we establish a variational principle in terms of Katok's entropy.  Finally using these two variational principles we develop a variational principle in terms of Brin-Katok local entropy. %We also discuss variational principles for metric mean dimension in terms of lower Katok's entropy and lower Brin-Katok local entropy respectively.
\end{abstract}

\section{Introduction}

The topological entropy is a basic invariant of dynamical systems which was studied for a long time. The interplay between ergodic theory and topological entropy was first investigated by Goodman \cite{goodman1971relating}.
Gromov \cite{G} introduced a new topological invariant of dynamical systems called mean topological dimension. Mean topological dimension measures the complexity of dynamical systems of infinite entropy. Lindenstrauss and Weiss \cite{LinWeiss2000MeanTopologicalDimension} introduced
metric mean dimension as an invariant of dynamical systems which majors mean topological dimension. The connection between ergodic theory and metric mean dimension was pioneered by Lindenstrauss and Tsukamoto \cite{lindenstrauss2018rate}. They established a variational principle which states metric mean dimension as a supremum of certain rate distortion functions over invariant measures of the system. Very recently, Gutman and \'Spiewak \cite{gutman2020around} showed that it is enough to take supremum over ergodic measures in  Lindenstrauss-Tsukamoto variational principle. For further applications of metric mean dimension, we refer to  \cite{tsukamoto2018large,tsukamoto2018mean,gutman2019new,gutman2020metric} and the references therein.

One of our motivation of this note is Problem 3 in \cite{gutman2020around}, where Gutman and \'Spiewak asked whether metric mean dimension can be expressed in terms of Brin-Katok local  entropy.
In this note, we give an affirmative answer to this problem and consequently build a variational principle for metric mean dimension in terms of Brin-Katok local  entropy. The proof of this variational principle is involved in Section \ref{sec:Variational principle}. To this end, we show a variational principle in terms of Shapira's entropy of an open cover. Furthermore, we prove  an alternative variational principle for metric mean dimension in terms of Katok's entropy, which drops $\lim_{\delta\to 0}$ in \cite{velozo2017rate}. We remark that Brin-Katok local  entropy and  Shapira's entropy are only defined for ergodic measures. Thus the variational principles established in terms of Brin-Katok local  entropy or  Shapira's entropy take the supremum over the set of ergodic measures of the system.

\section{Preliminaries}

\subsection{Topological entropy and variational principle}
Let $(\XX, d, T)$ be a topological dynamical system, i.e. $(\XX, d)$ is a compact metric space and $T: \XX\to \XX$ is a homeomorphism. Define
$$
d_n(x,y)=\max_{0\le k\le n-1} d(T^k x, T^k y),
$$
for $n\in \N$. Let $K\subset X$ and $\epsilon>0$. A subset $E\subset K$ is said to be {\it $(n, \epsilon)$-separated} of $K$ if distinct $x,y\in E$ implies $d_n(x,y)>\epsilon$. Denote by $s_n(d, T, K, \epsilon)$ (simply $s_n(K, \epsilon)$ when $d,T$ are fixed) the largest cardinality of any $(n, \epsilon)$-separated subset of $K$. Define
$$
S(d, T, K,\epsilon)=\limsup_{n\to \infty} \frac{1}{n} \log s_n(d, T, K, \epsilon).
$$
We sometimes write $S(K, \epsilon)$ when $d,T$ are fixed. 

A subset $F\subset \XX$ is said to be {\it $(n, \epsilon)$-spanning} of $K\subset \XX$ if for any $x\in K$ there exists $y\in F$ such that $d_n(x,y)\le \epsilon$. Denote by $r_n(d, T, K, \epsilon)$ (simply $r_n(K, \epsilon)$ when $d,T$ are fixed) the smallest cardinality of any $(n, \epsilon)$-spanning subset in $K$. Define
$$
R(d, T, K,\epsilon)=\limsup_{n\to \infty} \frac{1}{n} \log r_n(d, T, K, \epsilon).
$$
We sometimes write $R(K, \epsilon)$ when $d,T$ are fixed. It is easy to check that
$$
r_n(K,\epsilon)\le s_n(K, \epsilon) \le r_n(K, \frac{\epsilon}{2})
$$
and consequently
$$
R(K,\epsilon)\le S(K, \epsilon) \le R(K, \frac{\epsilon}{2}).
$$

The {\it topological entropy of $K$} is defined by
$$
h_\top(T, K)=\lim\limits_{\epsilon\to 0} S(d, T, K, \epsilon)=\lim\limits_{\epsilon\to 0} R(d, T, K, \epsilon),
$$
which is independent of $d$. 

Let $\mu$ be a $T$-invariant measure on $\XX$, i.e. $T\mu=\mu$. For a Borel partition $P$ of $\XX$, the entropy $H_\mu(P)$ of $P$ is defined by
$$
H_\mu(P)=-\sum_{A\in P} \mu(A)\log \mu(A).
$$
For convention, we set $0\cdot \infty= 0$. Moreover, the (dynamical) entropy of $P$ is defined as 
$$
h_{\mu}(P)=\lim\limits_{n\to \infty} \frac{1}{n} H_{\mu}(\vee_{i=0}^{n-1} T^{-i}P),
$$
where $P\vee Q=\{A\cap B: A\in P, B\in Q \}$. 
The {\it measure-theoretic entropy} $h_{\mu}(T)$ of $\mu$  is defined by
$$
h_\mu(T)=\sup_{P} h_{\mu}(P),
$$
where the suprema are taken over all Borel partitions $P$ of $\XX$.
One of the link between topological entropy and measure-theoretic entropy in ergodic theory  is the variational principle, which was proved originally by Goodman \cite{goodman1971relating}.
\begin{thm}
	Let $(\XX, d, T)$ be a topological dynamical system. Then
	$$
	h(\XX, T)=\sup_{\mu\in\MM_T(\XX)}h_\mu(T)=\sup_{\mu\in\EE_T(\XX)}h_\mu(T),
	$$
	where $\MM_T(\XX)$ is the collection of $T$-invariant measures and $\EE_T(\XX)\subset \MM_T(\XX)$ consists of ergodic ones.
\end{thm}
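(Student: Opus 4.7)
The strategy is to prove the two inequalities $\sup_{\mu\in\MM_T(\XX)} h_\mu(T) \le h_\top(T)$ and $h_\top(T) \le \sup_{\mu\in\MM_T(\XX)} h_\mu(T)$ separately, and then reduce the supremum over $\MM_T(\XX)$ to a supremum over $\EE_T(\XX)$ via the ergodic decomposition. The first inequality is the ``easy'' direction (Goodwyn), the second is the ``hard'' direction (Goodman/Misiurewicz), and the ergodic reduction rests on affineness of $\mu\mapsto h_\mu(T)$.

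\textbf{Goodwyn's direction.} Fix $\mu\in\MM_T(\XX)$, a finite Borel partition $P=\{A_1,\dots,A_k\}$ of $\XX$, and $\delta>0$. I would first approximate $P$ from inside by compact sets $C_i\subset A_i$ with $\mu(A_i\setminus C_i)<\delta$, set $\epsilon=\tfrac{1}{2}\min_{i\neq j}d(C_i,C_j)$, and let $P'$ be the partition refining $P$ by the indicator of $\bigcup_i C_i$. A $d_n$-ball of radius $\epsilon/2$ meets only finitely many atoms of $\vee_{i=0}^{n-1}T^{-i}P'$ (the bound being of the form $2^n$ coming from the ambiguity on the ``bad set''). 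Combining this with an $(n,\epsilon)$-spanning set of size $r_n(\XX,\epsilon)$ and the standard inequality $H_\mu(Q)\le \log|Q|$ for finite partitions plus a convexity/conditional-entropy step to absorb the refinement, I would obtain
\[
\tfrac{1}{n}H_\mu\bigl(\vee_{i=0}^{n-1}T^{-i}P\bigr)\le \tfrac{1}{n}\log r_n(\XX,\epsilon) + c(\delta),
\]
with $c(\delta)\to 0$ as $\delta\to 0$. Taking $n\to\infty$ then $\delta\to0$ and $\epsilon\to 0$ gives $h_\mu(P)\le h_\top(T)$, and passing to the supremum over $P$ yields $h_\mu(T)\le h_\top(T)$.

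\textbf{Goodman's direction.} Fix $\epsilon>0$ and pick, for each $n$, a maximal $(n,\epsilon)$-separated set $E_n\subset\XX$ of cardinality $s_n(\XX,\epsilon)$. Form the empirical measures $\sigma_n=\tfrac{1}{|E_n|}\sum_{x\in E_n}\delta_x$ and their time-averages $\mu_n=\tfrac{1}{n}\sum_{i=0}^{n-1}T^i_*\sigma_n$. By Banach--Alaoglu, a subsequence $\mu_{n_k}$ converges weakly-$*$ to some $\mu$, and a standard telescoping argument shows $\mu\in\MM_T(\XX)$. The crucial step is to build a finite Borel partition $P$ whose atoms have diameter less than $\epsilon$ and whose boundaries are $\mu$-null; this is done by choosing a finite $\epsilon/2$-cover, forming Voronoi cells, and perturbing each cell's boundary using the fact that only countably many radii can give positive $\mu$-mass to a sphere. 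Since each atom of $\vee_{i=0}^{n-1}T^{-i}P$ contains at most one point of $E_n$, we have $H_{\sigma_n}(\vee_{i=0}^{n-1}T^{-i}P)=\log|E_n|$. A standard concavity/semicontinuity argument for the entropy of partitions with $\mu$-null boundary then yields $h_\mu(P)\ge \limsup_n \tfrac{1}{n}\log s_n(\XX,\epsilon)= S(\XX,\epsilon)$. Letting $\epsilon\to 0$ finishes this half.

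\textbf{Ergodic reduction and main obstacle.} For the last equality, write $\mu=\int_{\EE_T(\XX)}\nu\, d\tau(\nu)$ via ergodic decomposition; since $\mu\mapsto h_\mu(T)$ is affine with respect to this decomposition, $h_\mu(T)=\int h_\nu(T)\,d\tau(\nu)$, so some ergodic $\nu$ satisfies $h_\nu(T)\ge h_\mu(T)$. The main technical obstacle is the partition-with-null-boundary construction in Goodman's direction, because controlling $H_{\sigma_n}$ under weak-$*$ convergence is not automatic: upper semicontinuity of the entropy of a partition in the measure variable only holds when the atoms have $\mu$-null boundary. A secondary subtlety is a clean proof of affineness of $h_\mu(T)$ in the decomposition step, which I would derive from the Shannon--McMillan--Breiman theorem applied fibrewise to the ergodic components.
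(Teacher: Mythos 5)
The paper does not actually prove this theorem: it is stated as the classical variational principle and attributed to Goodman, so there is no in-paper argument to compare against. Your proposal is the standard Goodwyn--Misiurewicz proof (compact inner approximation of a partition for the inequality $h_\mu(T)\le h_\top(\XX,T)$; maximal $(n,\epsilon)$-separated sets, empirical averages and a weak-$*$ limit for the reverse inequality; ergodic decomposition plus affineness and measurability of $\mu\mapsto h_\mu(T)$ for the reduction to $\EE_T(\XX)$), and in outline it is sound. You also correctly isolate the two genuinely delicate points: upper semicontinuity of $\nu\mapsto H_\nu(P)$ only holds at partitions whose atoms have $\mu$-null boundary, and the integral formula $h_\mu(T)=\int h_\nu(T)\,d\tau(\nu)$ (which also covers the case $h_\mu(T)=\infty$) is what makes the ergodic reduction work.

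One step is stated incorrectly, though the defect is classical and repairable. In the Goodwyn direction, the bound ``a $d_n$-ball meets at most $2^n$ atoms of $\vee_{i=0}^{n-1}T^{-i}P'$'' contributes an additive $\log 2$ per unit time, and this term does \emph{not} tend to $0$ with $\delta$: shrinking $\delta$ only controls the conditional entropy $H_\mu(P\mid P')$, not the factor $2$ coming from the ambiguity between each compact core $C_i$ and the residual set. So your displayed inequality should read
\[
\tfrac1n H_\mu\bigl(\vee_{i=0}^{n-1}T^{-i}P\bigr)\ \le\ \tfrac1n \log r_n(\XX,\epsilon) \ +\ \log 2\ +\ c(\delta),
\]
and the $\log 2$ cannot be absorbed into $c(\delta)$; it must be removed afterwards by the usual power trick, i.e.\ apply the conclusion to $T^m$, use $h_\mu(T^m)=m\,h_\mu(T)$ and $h_\top(\XX,T^m)=m\,h_\top(\XX,T)$, divide by $m$ and let $m\to\infty$. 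With that correction, and with the $\limsup$ in the Misiurewicz half taken along a subsequence realizing $S(\XX,\epsilon)$ before extracting the weak-$*$ limit, the argument closes.
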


\subsection{Metric mean dimension}
Let $(\XX, d, T)$ be a topological dynamical system. The {\it upper metric mean dimension} of the system $(\XX, d, T)$ is defined by
$$
\overline{\mdim}_M (\XX, d, T)=\limsup_{\epsilon\to 0} \frac{S(d, T, \XX, \epsilon)}{\log \frac{1}{\epsilon}},
$$
which is also equal to $\limsup_{\epsilon\to 0} \frac{R(d, T, \XX, \epsilon)}{\log \frac{1}{\epsilon}}$.
Similarly, the {\it lower metric mean dimension} is defined by
$$
\underline{\mdim}_M (\XX, d, T)=\liminf_{\epsilon\to 0} \frac{S(d, T, \XX, \epsilon)}{\log \frac{1}{\epsilon}}.
$$
If the upper and lower metric mean dimensions coincide, then we call their common value the metric mean dimension of $(\XX, d, T)$ and denote it by ${\mdim}_M (\XX, d, T)$. Unlike the topological entropy, the metric mean dimension depends on the metric $d$.

Lindenstrauss and Tsukamoto \cite{lindenstrauss2019double} provided a variational principle for metric mean dimension in terms of certain rate-distortion functions. Velozo and Velozo \cite{velozo2017rate} proved an alternative formulation in terms of Katok entropy. Gutman and \'Spiewak \cite{gutman2020around} showed another one in terms of R\'enyi information dimension.
%\begin{thm}\label{thm:gutman}
%	Let $(\XX, d, T)$ be a topological dynamical system. Then
%	$$
%	\overline{\mdim}_M (\XX, d, T)=\limsup_{\epsilon\to 0} \frac{1}{\log \frac{1}{\epsilon}} \sup_{\mu\in \MM_T(\XX)} \%inf_{\text{diam}(P)\le \epsilon}h_\mu (P),
%	$$
%	where the infima are taken over all Borel partitions $P$ of $\XX$ with diameter at most $\epsilon$.
%\end{thm}

\subsection{Entropy of an open cover}
Let $(\XX, d, T)$ be a topological dynamical system. Let $\UU$ be a finite open cover of $\XX$. The {\it topological entropy of $\UU$} is define as 
$$
h_{\top}(\UU, T)=\lim_{n\to \infty} \frac{1}{n} \log \NN(\UU^n), 
$$
where $\UU^n=\vee_{i=0}^{n-1}T^{-i}\UU$ and $\NN(\UU)$ the minimal cardinality of a subcover
of $\UU$.

The following version of the local variational principle for the entropy of an open cover was first conjectured by Romagnoli \cite{romagnoli2003local} and then proved by Glasner and Weiss \cite{GW06}. 
%Furthermore, Huang, Maass, Romagnoli and Ye \cite{huang2004entropy} shown that it suffices to take  supremum over the collection of all ergodic measures instead of all invariant measures in this local variational principle.
\begin{thm}[\cite{GW06}, Theorem 7.11]\label{thm:variant principal open cover}
    Let $(\XX, d, T)$ be a topological dynamical system and let $\UU$ be a finite open cover of $\XX$. Then
    $$
    h_{\top}(\UU, T)=\sup_{\mu\in \MM_T(\XX)} \inf_{\PP\succ \UU} h_{\mu}(\PP, T)=\sup_{\mu\in \EE_T(\XX)} \inf_{\PP\succ \UU} h_{\mu}(\PP, T)
    $$
where the infimum is taken over all finite Borel partitions $\PP$ of $\XX$ which refine $\UU$ (i.e. $A\in \PP$ implies that $A\subset B$ for some $B\in \UU$). 
\end{thm}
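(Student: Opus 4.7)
This is the local variational principle for the entropy of a cover due to Glasner-Weiss (following Romagnoli's formulation), so my plan is to prove the two inequalities
\[
\sup_{\mu\in\MM_T(\XX)}\inf_{\PP\succ \UU}h_\mu(\PP,T) \le h_\top(\UU,T) \le \sup_{\mu\in\EE_T(\XX)}\inf_{\PP\succ \UU}h_\mu(\PP,T)
\]
separately; together with the trivial $\sup_{\mu\in\EE_T(\XX)}\inf_{\PP\succ\UU}h_\mu(\PP,T) \leq \sup_{\mu\in\MM_T(\XX)}\inf_{\PP\succ\UU}h_\mu(\PP,T)$, both suprema are then pinched to $h_\top(\UU,T)$.

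For the upper bound, fix $\mu\in\MM_T(\XX)$ and $\varepsilon>0$. The strategy is to construct, for each large $k$, a Borel partition $\PP_k\succ\UU$ whose $k$-fold join agrees essentially with a disjointification of a minimal subcover of $\UU^k$: pick a minimal subcover $\alpha_k\subset\UU^k$ together with fixed factorisations $V=V_0\cap T^{-1}V_1\cap\cdots\cap T^{-(k-1)}V_{k-1}$ with $V_i\in\UU$, disjointify $\alpha_k$ into a Borel partition $\gamma_k$ with $|\gamma_k|=\NN(\UU^k)$ and atoms contained in the corresponding $V$, and define the label $\chi(x)=V_0$ whenever $x$ lies in the $\gamma_k$-atom associated to $V\in\alpha_k$. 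Setting $\PP_k=\{\chi^{-1}(U):U\in\UU\}$ gives $\PP_k\succ\UU$, and a direct verification from the definition of $\chi$ shows $\vee_{i=0}^{k-1}T^{-i}\PP_k$ refines $\gamma_k$ up to a combinatorial correction of size $o(k)$, whence $H_\mu(\PP_k^k)\le\log\NN(\UU^k)+o(k)$ and $h_\mu(\PP_k,T)\le h_\top(\UU,T)+\varepsilon$.

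For the lower bound I would build an invariant measure witnessing $h_\top(\UU,T)$ by a Krylov-Bogolyubov averaging: pick one point per element of $\alpha_k$ to form $S_k$, and set $\nu_k=|S_k|^{-1}\sum_{x\in S_k}\delta_x$ and $\mu_k=\frac{1}{k}\sum_{j=0}^{k-1}T^j_*\nu_k$. Any weak-$*$ accumulation point $\mu^*$ is $T$-invariant. The central claim is $h_{\mu^*}(\PP,T)\ge h_\top(\UU,T)$ for every $\PP\succ\UU$. Since $\PP^k$ refines $\UU^k$, points of $S_k$ lying in distinct elements of $\alpha_k$ are separated by $\PP^k$ up to a bounded overcount from cover overlaps, giving $H_{\nu_k}(\PP^k)\ge\log\NN(\UU^k)-O(1)$; the Misiurewicz block inequality together with concavity of $\mu\mapsto H_\mu(\mathcal{Q})$ then yields $H_{\mu_k}(\PP^\ell)\ge\tfrac{\ell}{k}H_{\nu_k}(\PP^k)-O(\ell)$ for each fixed $\ell$, and passing $k\to\infty$ via upper semicontinuity of $\mu\mapsto H_\mu(\PP^\ell)$ at $\mu^*$ (arranged by replacing $\PP$ with a partition having $\mu^*$-null atom boundaries while still refining $\UU$) transfers the estimate to $\mu^*$. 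The ergodic version is obtained by ergodic-decomposing $\mu^*=\int\mu_\omega\,d\tau(\omega)$, using affinity of $h_\bullet(\mathcal{Q},T)$ and a measurable selection to produce an ergodic component $\mu_\omega$ with $\inf_\PP h_{\mu_\omega}(\PP,T)\ge h_\top(\UU,T)$.

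The principal obstacle is the semicontinuity step: $\mu\mapsto H_\mu(\mathcal{Q})$ is not upper semicontinuous for a general Borel partition $\mathcal{Q}$, so one must modify $\PP$ along atom boundaries of $\mu^*$-measure zero while preserving the condition $\PP\succ\UU$, and this boundary surgery is the technical heart of the Glasner-Weiss argument.
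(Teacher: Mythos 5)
You should first note that the paper contains no proof of this statement to compare against: it is quoted verbatim from Glasner--Weiss (Theorem 7.11 of \cite{GW06}), i.e.\ the resolution of Romagnoli's conjecture, with the ergodic refinement resting on the Huang--Ye affinity theory; the author uses it as a black box. Measured against the known proofs, your sketch has the right global architecture (a combinatorial ``easy'' inequality, a Misiurewicz-type averaging for the hard one, boundary surgery for semicontinuity, ergodic decomposition via affinity of $\mu\mapsto\inf_{\PP\succ\UU}h_\mu(\PP,T)$), but both of your key estimates are asserted exactly where the real work lies, and as literally stated both fail.

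For the upper bound: your partition $\PP_k=\{\chi^{-1}(U)\}$ records only the first factor $V_0$ of the $\gamma_k$-atom containing a point, so the $\vee_{i=0}^{k-1}T^{-i}\PP_k$-atom of $x$ is determined by the first-factor labels of the $k$ \emph{different} atoms containing $x,Tx,\dots,T^{k-1}x$; these are unrelated to the factorisation $(V_0,\dots,V_{k-1})$ of the single atom containing $x$, so neither $\vee_i T^{-i}\PP_k\succ\gamma_k$ nor the bound you actually need, $H_\mu\bigl(\vee_i T^{-i}\PP_k\mid\gamma_k\bigr)=o(k)$, holds. (Note also that ``$\vee_i T^{-i}\PP_k$ refines $\gamma_k$'' would give $H_\mu(\vee_i T^{-i}\PP_k)\ge H_\mu(\gamma_k)$, the wrong direction.) What this step genuinely requires is Romagnoli's identity $\inf_{\PP\succ\UU}h_\mu(\PP,T)=\lim_k\frac1k\inf_{\mathcal{Q}\succ\UU^k}H_\mu(\mathcal{Q})$, a nontrivial combinatorial theorem, not a direct verification. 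For the lower bound: taking one point per element of a minimal subcover $\alpha_k$ does not give $H_{\nu_k}(\PP^k)\ge\log\NN(\UU^k)-O(1)$ for an arbitrary $\PP\succ\UU$. Each atom of $\PP^k$ is contained in some member of $\UU^k$, but the members so obtained need only cover $S_k$, not $\XX$, so nothing forces at least $\NN(\UU^k)$ atoms to meet $S_k$; with heavily overlapping covers a single $\PP^k$-atom can absorb many points of $S_k$ assigned to distinct members of $\alpha_k$, and there is no ``bounded overcount.'' Since $\PP$ is quantified \emph{after} $\mu$ in the statement, you cannot adapt $S_k$ to $\PP$; circumventing exactly this is why Glasner--Weiss pass through zero-dimensional extensions (where clopen $\PP\succ\UU$ satisfy $\NN(\UU^k)\le\sharp\{\text{nonempty atoms of }\PP^k\}$ in a usable way) and why the argument occupies a paper rather than being a routine Misiurewicz adaptation. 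The boundary-surgery issue you single out as ``the technical heart'' is real but is not the main obstruction.
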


Let  $\UU$ be a finite open cover of $\XX$. We denote by $\diam(\UU)$ the diameter of the cover, that is , the maximal diameter of any element of $\UU$. We denote by $\Leb(\UU)$ the Lebesgue number of $\UU$, that is, the largest number $\delta$ with the property that every open ball of radius $\delta$ is contained in an element of $\UU$.
A simple fact which we need is as follow. Indeed it follows by 
$s_n(\XX, 3\text{\rm diam}(\UU))\le \NN(\UU^n)\le s_n(\XX, \text{\rm Leb}(\UU)) $.
See \cite[Lemma 3.5]{gutman2020around} or \cite[Theorem 6.1.8]{downarowicz2011entropy} for details.
\begin{lem}\label{lem:entropy}
    	Let $(\XX, d, T)$ be a topological dynamical system. Let $\UU$ be a finite open cover of $\XX$. Then
    	$$
    	S(\XX, 3\text{\rm diam}(\UU))\le h_{\top}(\UU, T)\le S(\XX, \text{\rm Leb}(\UU)).
    	$$
\end{lem}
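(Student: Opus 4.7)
The plan is to establish the two inequalities via the sandwiched estimate
$s_n(\XX, 3\diam(\UU)) \le \NN(\UU^n) \le s_n(\XX, \Leb(\UU))$
hinted at in the text, and then take $\limsup \frac{1}{n}\log$.

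For the right-hand inequality, I would fix $n\in\N$, pick a maximal $(n,\Leb(\UU))$-separated set $F\subset \XX$ (which is automatically $(n,\Leb(\UU))$-spanning), and show that $F$ produces a subcover of $\UU^n$ of cardinality $|F|$. The key observation is that for each $y\in F$ and $0\le k\le n-1$, the ball $B_d(T^k y,\Leb(\UU))$ is contained in some $U_k^{(y)}\in\UU$ by definition of Lebesgue number. Consequently the Bowen ball decomposes as
\[
B_{d_n}(y,\Leb(\UU)) \;=\; \bigcap_{k=0}^{n-1} T^{-k}B_d(T^k y,\Leb(\UU)) \;\subset\; \bigcap_{k=0}^{n-1} T^{-k} U_k^{(y)} \in \UU^n.
\]
Since the Bowen balls $B_{d_n}(y,\Leb(\UU))$ already cover $\XX$, so do the elements of $\UU^n$ picked above; thus $\NN(\UU^n)\le s_n(\XX,\Leb(\UU))$.

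For the left-hand inequality, I would note that any element of $\UU^n$ has the form $\bigcap_{k=0}^{n-1} T^{-k} U_k$ with $U_k\in\UU$, so if $x,y$ both lie in this intersection then $d(T^k x, T^k y)\le \diam(\UU)$ for every $k$, giving $d_n(x,y)\le \diam(\UU)< 3\diam(\UU)$. Hence an $(n,3\diam(\UU))$-separated set $E$ meets each element of any subcover of $\UU^n$ in at most one point, which yields $s_n(\XX,3\diam(\UU))\le \NN(\UU^n)$.

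Combining the two bounds, dividing by $n$, taking $\limsup$ and using that $\frac{1}{n}\log \NN(\UU^n)$ converges (by subadditivity) to $h_\top(\UU,T)$ produces
\[
S(\XX,3\diam(\UU))\;\le\; h_\top(\UU,T)\;\le\; S(\XX,\Leb(\UU)),
\]
as required. There is no genuine obstacle here; the only mildly delicate point is keeping the direction of monotonicity straight (larger scale $\Rightarrow$ fewer separating/spanning sets), and the factor $3$ in the left-hand bound is a safe overestimate, since the argument in fact gives the sharper statement with $\diam(\UU)$ in place of $3\diam(\UU)$.
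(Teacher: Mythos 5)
Your argument is correct and is precisely the route the paper takes: it establishes the sandwich $s_n(\XX,3\diam(\UU))\le \NN(\UU^n)\le s_n(\XX,\Leb(\UU))$ (the paper merely states this inequality and cites \cite[Lemma 3.5]{gutman2020around} and \cite[Theorem 6.1.8]{downarowicz2011entropy} for the details you supply), then divides by $n$ and passes to the limit. The only point worth flagging is the usual open-versus-closed ball pedantry in the spanning step (maximality of $F$ gives $d_n(x,y)\le\Leb(\UU)$, whereas the Lebesgue-number property is stated for open balls), which is harmless and glossed over in the references as well.
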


Let $\mu\in \EE_T(\XX)$. Let $\UU$ be a finite open cover. For $\delta\in (0,1)$, define $
\NN_\mu(\UU, \delta)
$
to be the minimum number of elements of $\UU$, needed
to cover a subset of $\XX$ whose $\mu$-measure is at least $\delta$. Define
\begin{equation}\label{eq:Shapira}
    h_\mu^S(\UU)=\lim_{n\to \infty} \log \NN_\mu(\UU^n, \delta).
\end{equation}
The above limit exists and is independent of $\delta$ due to Shapira \cite[Theorem 4.2]{shapira2007measure}. Moreover, Shapira proved the following theorem.
\begin{thm}[\cite{shapira2007measure}, Theorem 4.4]\label{thm:shapira}
    Let $(\XX, d, T)$ be a topological dynamical system and let $\UU$ be a finite open cover of $\XX$. Let $\mu\in \EE_T(\XX)$. Then 
    $$
    h_\mu^S(\UU)=\inf_{\PP\succ \UU} h_\mu(\PP), 
    $$
    where $\PP$ runs over all partitions refining $\UU$.
\end{thm}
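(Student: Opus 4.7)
The plan is to prove the two inequalities separately, with the main obstacle concentrated in the lower bound.

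\emph{Upper bound} $h_\mu^S(\UU) \le \inf_{\PP \succ \UU} h_\mu(\PP)$. Fix $\PP \succ \UU$ and $\epsilon > 0$. Since $\mu$ is ergodic, the Shannon--McMillan--Breiman theorem gives $-\frac{1}{n}\log \mu(\PP^n(x)) \to h_\mu(\PP)$ for $\mu$-a.e.\ $x$, where $\PP^n(x)$ denotes the atom of $\PP^n = \vee_{i=0}^{n-1}T^{-i}\PP$ containing $x$. By Egorov's theorem, for any $\eta \in (0, 1-\delta)$ there exist $N \in \N$ and a measurable set $B$ with $\mu(B) > 1 - \eta$ such that $\mu(\PP^n(x)) \ge e^{-n(h_\mu(\PP)+\epsilon)}$ for all $n \ge N$ and all $x \in B$. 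Thus $B$ is covered by at most $e^{n(h_\mu(\PP)+\epsilon)}$ atoms of $\PP^n$, each contained in some element of $\UU^n$ because $\PP \succ \UU$; since $\mu(B) \ge \delta$, this yields $\NN_\mu(\UU^n, \delta) \le e^{n(h_\mu(\PP)+\epsilon)}$. Sending $n \to \infty$ and then $\epsilon \to 0$ gives $h_\mu^S(\UU) \le h_\mu(\PP)$, and the infimum over $\PP$ completes this direction.

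\emph{Lower bound} $h_\mu^S(\UU) \ge \inf_{\PP \succ \UU} h_\mu(\PP)$. The goal is to build, for any $\epsilon > 0$, a partition $\beta \succ \UU$ with $h_\mu(\beta) \le h_\mu^S(\UU) + \epsilon$. Using that $h_\mu^S(\UU)$ is independent of $\delta \in (0,1)$ (the fact cited from \cite{shapira2007measure} just before the statement), fix $\delta$ close to $1$. For each large $n$, let $\mathcal{W}_n \subset \UU^n$ be a subcover with $|\mathcal{W}_n| = \NN_\mu(\UU^n, \delta)$ and $\mu(\cup \mathcal{W}_n) \ge \delta$. After enumerating $\mathcal{W}_n$, let $\alpha_n$ be the partition of $\XX$ obtained by assigning each $x \in \cup \mathcal{W}_n$ to the first $W \in \mathcal{W}_n$ containing it, with one residual atom $\XX \setminus \cup \mathcal{W}_n$; then each proper atom of $\alpha_n$ refines some element of $\UU^n$, and $|\alpha_n| \le |\mathcal{W}_n| + 1$. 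Fix an auxiliary partition $\PP_0 \succ \UU$ (e.g.\ obtained from an ordering of $\UU$) and set $\beta_n := \alpha_n \vee \PP_0$, so that $\beta_n \succ \UU$.

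The main obstacle is the quantitative estimate $h_\mu(\beta_n) \le \frac{1}{n}\log|\mathcal{W}_n| + o_\delta(1)$, with error vanishing as $\delta \to 1$. For $k \gg n$, the $\beta_n^k$-atom of a $\mu$-typical $x$ is determined by the $\mathcal{W}_n$-itinerary $(W(x), W(Tx), \ldots, W(T^{k-1}x))$ together with the $\PP_0$-coordinates during the excursions of the orbit outside $\cup \mathcal{W}_n$. Because each $W \in \mathcal{W}_n$ already records $n$ consecutive $\UU$-coordinates, consecutive entries of the itinerary impose strong compatibility constraints, so only $\lceil k/n \rceil + O(1)$ ``blocks'' contribute independently; meanwhile, Birkhoff's ergodic theorem applied to $\mathbf{1}_{\cup \mathcal{W}_n}$ bounds the total excursion length by $(1-\delta)k$ with high probability, so the auxiliary coordinates contribute at most $(1-\delta)k\log(|\UU|\cdot|\PP_0|)$ to $H_\mu(\beta_n^k)$. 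Combining gives $\frac{1}{k}H_\mu(\beta_n^k) \le \frac{1}{n}\log|\mathcal{W}_n| + O(1-\delta)$; sending $k \to \infty$, then $n \to \infty$ along a sequence realizing $h_\mu^S(\UU)$, and finally $\delta \to 1$ closes the argument. An alternative formalization, as in Shapira's original proof, organizes this block-counting via a Rokhlin tower built on a return time to $\cup \mathcal{W}_n$, which makes the propagation of $\mathcal{W}_n$-information along orbits bookkeepable without the ad hoc splitting into ``blocks'' and ``excursions''.
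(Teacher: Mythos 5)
First, a point of reference: this paper does not prove the statement at all --- it is quoted as Theorem 4.4 of Shapira \cite{shapira2007measure} and used as a black box --- so your attempt can only be measured against Shapira's own argument. Your easy direction, $h_\mu^S(\UU)\le\inf_{\PP\succ\UU}h_\mu(\PP)$, is correct: Shannon--McMillan--Breiman plus Egorov produces at most $e^{n(h_\mu(\PP)+\epsilon)}$ atoms of $\PP^n$ covering a set of measure at least $\delta$, and each such atom lies inside an element of $\UU^n$ because $\PP^n\succ\UU^n$.

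The hard direction, however, has a genuine gap at exactly the step you label ``the main obstacle''. Two separate claims in your block-counting heuristic are unjustified and, as stated, false in general. (i) You assert that the itinerary $(W(x),\dots,W(T^{k-1}x))$ has entropy about $\frac{k}{n}\log|\mathcal{W}_n|$ because consecutive entries impose strong compatibility constraints. But knowing $W(T^jx)=(U_0,\dots,U_{n-1})\in\UU^n$ only tells you that $T^{j+i}x\in U_i$; since $\UU$ is a cover whose elements overlap, many elements of $\mathcal{W}_n$ can contain $T^{j+1}x$ and be consistent with this information, so the label at time $j+1$ is not even approximately determined by the label at time $j$, and recording the label at every time rather than every $n$-th time is not obviously redundant. (ii) You assert that the $\PP_0$-coordinates are needed only during excursions outside $\cup\mathcal{W}_n$. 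But $\PP_0$ refines $\UU$, so a single element of $\UU$ (hence a single coordinate of $W(T^jx)$) may contain many atoms of $\PP_0$; the conditional entropy of $\PP_0(T^jx)$ given the entire $\mathcal{W}_n$-itinerary need not be small, and the resulting error is of order $h_\mu(\PP_0)$, not $O(1-\delta)$. The repair is precisely the device you mention only in passing: build a tower of height comparable to $n$, choose one element $W=(U_0,\dots,U_{n-1})\in\mathcal{W}_n$ per base point, and let that single choice \emph{define} the $\UU$-refining partition along the whole fiber (the point at level $j$ is assigned to $U_j$), using an auxiliary partition only off the tower. That construction makes both the $\frac1n\log|\mathcal{W}_n|$ count and the refinement of $\UU$ true by fiat rather than by the asserted redundancy. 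As written, your lower bound is an outline whose key estimate remains unproved.
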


Combining Theorem \ref{thm:variant principal open cover} and Theorem \ref{thm:shapira}, we have an alternative local variantional principal of a finite open cover as follows: 

\begin{thm}\label{thm:variantional principa open cover shapira}
    Let $(\XX, d, T)$ be a topological dynamical system and let $\UU$ be a finite open cover of $\XX$. Then
\begin{equation*}\label{eq:variant principal shapira}
    h_{\top}(\UU, T)=\sup_{\mu\in \EE_T(\XX)} h_\mu^S(\UU).
\end{equation*}
\end{thm}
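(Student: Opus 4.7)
The plan is that this theorem is a one-line consequence of the two results just stated, and the proof I would write is essentially a substitution. I would start from the ergodic version of the local variational principle of Glasner--Weiss (Theorem \ref{thm:variant principal open cover}), namely
$$
h_{\top}(\UU, T)=\sup_{\mu\in \EE_T(\XX)} \inf_{\PP\succ \UU} h_{\mu}(\PP, T),
$$
which already restricts the supremum to ergodic invariant measures and is therefore exactly the setting in which Shapira's entropy $h_\mu^S(\UU)$ is defined.

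Next, I would invoke Shapira's identification (Theorem \ref{thm:shapira}): for every $\mu \in \EE_T(\XX)$,
$$
\inf_{\PP\succ \UU} h_\mu(\PP,T) = h_\mu^S(\UU).
$$
Substituting this equality, term by term, inside the supremum over $\EE_T(\XX)$ in the displayed identity above yields
$$
h_{\top}(\UU, T)=\sup_{\mu\in \EE_T(\XX)} h_\mu^S(\UU),
$$
which is the claimed formula.

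Since the two ingredients are supplied as black boxes, there is no genuine obstacle to overcome: the only point that might require a remark is that both the ergodic formulation in Theorem \ref{thm:variant principal open cover} and Shapira's theorem are stated for exactly the same class of refining partitions $\PP \succ \UU$, so the substitution is literal and requires no additional approximation or passage to a limit. I would therefore present the argument in a single short paragraph and not attempt to reprove either of the deep inputs.
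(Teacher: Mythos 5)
Your proposal is correct and coincides with the paper's argument: the paper likewise presents this theorem as an immediate combination of the ergodic form of the Glasner--Weiss local variational principle (Theorem \ref{thm:variant principal open cover}) with Shapira's identification $h_\mu^S(\UU)=\inf_{\PP\succ \UU} h_\mu(\PP)$ (Theorem \ref{thm:shapira}). No further comment is needed.
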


We remark that $h_\mu^S(\UU)$ is only defined for ergodic measure $\mu$.

\subsection{Brin-Katok local entropy}
Let $(\XX, d, T)$ be a topological dynamical system. For an invariant measure $\mu\in \MM_T(\XX)$ and a point $x\in \XX$, the {\it Brin-Katok local entropy} at $x$ is defined by
$$
h_\mu^{BK}(x, \epsilon):=\limsup_{n\to \infty} -\frac{1}{n} \log \mu(B_n(x, \epsilon)),
$$
where $B_n(x, \epsilon)=\{y\in \XX: d_n(x,y)<\epsilon \}$. The limit $\lim\limits_{\epsilon\to 0} h_{\mu}^{BK}(x, \epsilon)$ is denoted by $h_\mu^{BK}(x)$.
If additionally $\mu$ is ergodic, then for $\mu$-a.e. $x$,

\begin{itemize}
	\item  $h_{\mu}^{BK}(x, \epsilon)$ is a constant, denoted by $h_\mu^{BK}(\epsilon)$. 
	\item $h_\mu^{BK}(x)=h_\mu(T)$.
\end{itemize}
See \cite{brin1983local} for more details.
%\begin{lem}
%	Let $\mu\in \PP_T(X)$. Suppose $\mu=\int_{\XX} \mu_z d\mu(z)$ is the ergodic decomposition. Then $h_\mu(x, \epsilon)\le \int_{\XX} h_{\mu_z}(x, \epsilon) d\mu(z) $ for all $\epsilon>0$ and $x\in X$.
%\end{lem}
%\begin{proof}
%	By concavity of logarithm, Jensen's inequality and (reverse) Fatou lemma, we obtain that 
%	\begin{equation*}
%	\begin{split}
%	h_{\mu} (x, \epsilon)&=\limsup_{n\to \infty} -\frac{1}{n} \log \int_{\XX} \mu_{z} (B_n(x, \epsilon))d\mu(z)\\
%	&\le \limsup_{n\to \infty} \int_{\XX} -\frac{1}{n} \log \mu_{z}(B_n(x, \epsilon)) d\mu(z)\\
%	&\le  \int_{\XX} \limsup_{n\to \infty} -\frac{1}{n} \log \mu_{z}(B_n(x, \epsilon)) d\mu(z)\\
%	&= \int_{\XX} h_{\mu_z}(x, \epsilon) d\mu(z).
%	\end{split}
%	\end{equation*}
%\end{proof}

Gutman and \'Spiewak showed a lower bound for metric mean dimension in terms of Brin-Katok local entropy and asked whether it is also the upper bound . More precisely, they asked \cite[Problem 3]{gutman2020around}:

\vspace{5pt}

{\it Let $(\XX, d, T)$ be a topological dynamical system. Does the following equality hold? 
	$$\overline{\mdim}_M (\XX, d, T)=\limsup_{\epsilon\to 0} \frac{1}{\log \frac{1}{\epsilon}} \sup_{\mu \in \EE_T(\XX)} h_\mu^{BK} (\epsilon). $$
}
We show a positive answer to this question in Section \ref{sec:Variational principle}.
\subsection{Katok's entropy}
Let $(\XX, d, T)$ be a topological dynamical system. For $\delta\in (0,1)$, $n\in \N$ and $\epsilon>0$, define $N_{\mu}^\delta(n, \epsilon)$ to be  the smallest number of any $(n, \epsilon)$-dynamical balls (i.e. the balls have radius $\epsilon$ in the metric $d_n$) whose union has $\mu$-measure larger
than $\delta$. The {\it Katok's entropy} is defined by
$$
h_{\mu}^K(\epsilon, \delta)=\limsup_{n\to \infty} \frac{1}{n} \log N_\mu^\delta(n, \epsilon).   
$$
Katok \cite{katok1980lyapunov} proved that 
$$
\lim_{\epsilon\to 0} h_{\mu}^K(\epsilon, \delta)=h_\mu(T)
$$
for every $\delta>0$.
\subsection{Local entropy function}
Let $(\XX, d, T)$ be a topological dynamical system. For each $\epsilon>0$ and $x\in X$, we define the {\it local entropy function}
$$
h_d(x, \epsilon)=\inf \{S(K,\epsilon): K~\text{is a closed neighborhood of}~x \},
$$
and
$$
\tilde{h}_d(x, \epsilon)=\inf \{R(K,\epsilon): K~\text{is a closed neighborhood of}~x \},
$$
Clearly, $h_d(x,\epsilon)\ge \tilde{h}_d(x, \epsilon)$. Ye and Zhang showed that \cite[Proposition 4.4]{yezhang2007entropy}, $$\sup_{x\in \XX} \lim\limits_{\epsilon\to 0} h_d(x, \epsilon)=\sup_{x\in \XX} \lim\limits_{\epsilon\to 0} \tilde{h}_d(x, \epsilon)=h_\top(\XX, T).$$

\section{Variational principle I: Shapira's entropy}\label{sec:Variational principle I}

%Let $(\XX, d)$ be a compact metric space. 

Let $F$ be  a $(1, \frac{\epsilon}{4})$-spanning set of $\XX$. Obviously, the finite open cover $\UU:=\{B_1(x,\frac{\epsilon}{2}): x\in F \}$ satisfies that {\rm diam}$(\UU)\le \epsilon$ and {\rm Leb}$(\UU)\ge \frac{\epsilon}{4}$. Then we have the following lemma. See also \cite[Lemma 3.4]{gutman2020around} for the details.
\begin{lem}\label{lem:open cover}
    Let $(\XX, d)$ be a compact metric space. Then for every $\epsilon>0$ there exists a finite open cover $\UU$ of $\XX$ such that {\rm diam}$(\UU)\le \epsilon$ and {\rm Leb}$(\UU)\ge \frac{\epsilon}{4}$. 
\end{lem}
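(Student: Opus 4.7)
The plan is essentially to follow the construction already sketched in the paragraph preceding the lemma, and verify that it has both claimed properties. Since $\XX$ is compact, it is totally bounded, so I can choose a finite subset $F \subset \XX$ such that every point of $\XX$ lies within distance $\epsilon/4$ of some element of $F$ (i.e., $F$ is $(1, \epsilon/4)$-spanning, which for $n=1$ is simply an $\epsilon/4$-net). Define
\[
  \UU := \Bigl\{ B\bigl(x,\tfrac{\epsilon}{2}\bigr) : x \in F \Bigr\},
\]
where $B(x,r)$ denotes the open $d$-ball of radius $r$. Since $F$ is an $\epsilon/4$-net and $\epsilon/4 < \epsilon/2$, the collection $\UU$ is indeed an open cover of $\XX$, and it is finite because $F$ is.

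For the diameter bound, any two points $y,z \in B(x,\epsilon/2)$ satisfy $d(y,z) \le d(y,x) + d(x,z) < \epsilon$, so every member of $\UU$ has diameter at most $\epsilon$ and hence $\diam(\UU) \le \epsilon$. For the Lebesgue number, take any $y \in \XX$ and pick $x \in F$ with $d(x,y) \le \epsilon/4$. Then for every $z \in B(y, \epsilon/4)$ we have
\[
  d(x,z) \le d(x,y) + d(y,z) < \tfrac{\epsilon}{4} + \tfrac{\epsilon}{4} = \tfrac{\epsilon}{2},
\]
so $B(y, \epsilon/4) \subset B(x, \epsilon/2) \in \UU$. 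Thus every open ball of radius $\epsilon/4$ in $\XX$ is contained in some element of $\UU$, giving $\Leb(\UU) \ge \epsilon/4$.

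There is no real obstacle here; the statement is an elementary exercise in covering compact metric spaces by balls of carefully chosen radii. The only subtlety worth flagging is the factor of $2$ between the covering radius and the ball radius: starting from a net of spacing $\epsilon/4$ and doubling the radius to $\epsilon/2$ is exactly what buys the Lebesgue number $\epsilon/4$ without letting the diameter exceed $\epsilon$. This simultaneously gives both inequalities and finishes the proof.
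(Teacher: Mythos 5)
Your proposal is correct and is exactly the construction the paper itself uses (the paragraph preceding the lemma takes a $(1,\epsilon/4)$-spanning set $F$ and the cover by balls $B(x,\epsilon/2)$, $x\in F$); you have simply filled in the routine triangle-inequality verifications. Nothing further is needed.
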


Now we show our first variational principle.
\begin{thm}\label{thm:main shapira}
    Let $(\XX, d, T)$ be a topological dynamical system. Then
	$$\overline{\mdim}_M (\XX, d, T)=\limsup_{\epsilon\to 0} \frac{1}{\log \frac{1}{\epsilon}} \sup_{\mu \in \EE_T(\XX)} \inf_{\diam(\UU)\le \epsilon}h_\mu^S (\UU),$$
	and
	$$\underline{\mdim}_M (\XX, d, T)=\liminf_{\epsilon\to 0} \frac{1}{\log \frac{1}{\epsilon}} \sup_{\mu \in \EE_T(\XX)} \inf_{\diam(\UU)\le \epsilon}h_\mu^S (\UU), $$
	where $\UU$ runs over all finite open covers.
\end{thm}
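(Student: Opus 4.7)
The plan is to sandwich
$$G(\epsilon) := \sup_{\mu\in \EE_T(\XX)}\,\inf_{\diam(\UU)\le \epsilon} h_\mu^S(\UU)$$
between $S(\XX, 12\epsilon)$ and $S(\XX, \epsilon/4)$ for every sufficiently small $\epsilon>0$. Since the multiplicative constants inside the argument of $S(\XX,\cdot)$ wash out after dividing by $\log(1/\epsilon)$ and passing to either a $\limsup$ or $\liminf$ as $\epsilon\to 0$, the two claimed equalities will follow from a single sandwich estimate.

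For the upper bound I would apply the elementary inequality $\sup\inf \le \inf\sup$ together with Theorem \ref{thm:variantional principa open cover shapira}:
$$G(\epsilon)\le \inf_{\diam(\UU)\le \epsilon}\,\sup_{\mu\in \EE_T(\XX)} h_\mu^S(\UU)=\inf_{\diam(\UU)\le \epsilon} h_\top(\UU,T).$$
Evaluating this infimum at the cover $\UU_\epsilon$ supplied by Lemma \ref{lem:open cover} (which has $\diam(\UU_\epsilon)\le \epsilon$ and $\Leb(\UU_\epsilon)\ge \epsilon/4$) and invoking Lemma \ref{lem:entropy} bounds $h_\top(\UU_\epsilon,T)\le S(\XX,\Leb(\UU_\epsilon))\le S(\XX,\epsilon/4)$, so $G(\epsilon)\le S(\XX,\epsilon/4)$.

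For the lower bound the key step is a two-sided comparison between Shapira's covering number $\NN_\mu(\UU^n,\delta)$ and Katok's covering number $N_\mu^\delta(n,\epsilon)$. If $\diam(\UU)\le \epsilon$, then every element of $\UU^n$ has $d_n$-diameter at most $\epsilon$ and hence lies inside an $(n,\epsilon)$-Bowen ball; this gives $\NN_\mu(\UU^n,\delta)\ge N_\mu^\delta(n,\epsilon)$, and in the limit $h_\mu^S(\UU)\ge h_\mu^K(\epsilon,\delta)$. Dually, if $\Leb(\UU)\ge \epsilon$ then every $(n,\epsilon)$-Bowen ball lies inside an element of $\UU^n$, whence $h_\mu^S(\UU)\le h_\mu^K(\epsilon,\delta)$. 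The first inequality survives the infimum over covers, so $G(\epsilon)\ge \sup_{\mu\in\EE_T(\XX)} h_\mu^K(\epsilon,\delta)$. To produce an ergodic measure with large Katok entropy at scale $\epsilon$, I would apply Lemma \ref{lem:open cover} at scale $4\epsilon$ to obtain $\UU'$ with $\diam(\UU')\le 4\epsilon$ and $\Leb(\UU')\ge \epsilon$, and then combine Lemma \ref{lem:entropy} with Theorem \ref{thm:variantional principa open cover shapira} to find $\mu^*\in\EE_T(\XX)$ satisfying
$$h_{\mu^*}^S(\UU')\ge h_\top(\UU',T)-\eta\ge S(\XX,12\epsilon)-\eta$$
for any prescribed $\eta>0$. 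Since $\Leb(\UU')\ge\epsilon$, the dual comparison forces $h_{\mu^*}^K(\epsilon,\delta)\ge h_{\mu^*}^S(\UU')\ge S(\XX,12\epsilon)-\eta$, so sending $\eta\to 0$ closes the sandwich with $G(\epsilon)\ge S(\XX,12\epsilon)$.

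The main technical subtlety is the Katok--Shapira comparison itself: $h_\mu^S(\UU)$ is defined as an honest $\lim_n$ while $h_\mu^K(\epsilon,\delta)$ is a $\limsup_n$, and the two definitions use slightly different thresholds ($\ge\delta$ versus $>\delta$). Both issues are harmless because $h_\mu^S(\UU)$ is known to be independent of $\delta\in(0,1)$, so $\delta$ can be nudged to reconcile the conventions without changing the final estimate.
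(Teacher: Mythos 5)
Your proposal is correct, and the overall strategy coincides with the paper's: sandwich $G(\epsilon)$ between $S(\XX,c_1\epsilon)$ and $S(\XX,c_2\epsilon)$ using Lemma \ref{lem:open cover}, Lemma \ref{lem:entropy} and Theorem \ref{thm:variantional principa open cover shapira}, then let the constants wash out after dividing by $\log\frac{1}{\epsilon}$. The upper bound is essentially identical (the paper evaluates the infimum at $\UU_0$ before taking the supremum over $\mu$, which amounts to your $\sup\inf\le\inf\sup$ step). Where you genuinely diverge is the lower bound: the paper observes that any cover $\UU$ with $\diam(\UU)\le\frac{\epsilon}{8}$ refines $\UU_0$ (since $\Leb(\UU_0)\ge\frac{\epsilon}{4}$), so $\NN_\mu(\UU^n,\delta)\ge\NN_\mu(\UU_0^n,\delta)$ and hence $\inf_{\diam(\UU)\le\epsilon/8}h_\mu^S(\UU)\ge h_\mu^S(\UU_0)$ directly, with no intermediary. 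You instead route through Katok's entropy via the two-sided comparison $h_\mu^K(\epsilon,\delta)\le h_\mu^S(\UU)\le h_\mu^K(\epsilon',\delta)$ for covers with $\diam(\UU)\le\epsilon$ and $\Leb(\UU)\ge\epsilon'$ respectively; this is exactly the content of the paper's Lemma \ref{lem:delta covering number} and \eqref{eq：delta}, which the author reserves for Theorem \ref{thm:main Katok}. Both mechanisms are instances of monotonicity of covering numbers under containment, so your detour costs nothing logically and has the side benefit of establishing \eqref{eq:Katok and Shapira} along the way; the paper's refinement argument is marginally more direct. Your closing remarks on the $\limsup$ versus honest limit and the $>\delta$ versus $\ge\delta$ conventions are the right things to flag; the only further quibble is that a set of $d_n$-diameter at most $\epsilon$ sits inside an open $(n,2\epsilon)$-ball rather than an $(n,\epsilon)$-ball (compare \eqref{eq：delta}), but this factor of $2$ is absorbed by the same limiting argument as all the other constants.
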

\begin{proof}
Let $\epsilon>0$.
    By Lemma \ref{lem:open cover}, we can find a finite open cover $\UU_0$ of $\XX$ with $\diam(\UU_0)\le \epsilon$ and $\Leb(\UU_0)\ge \frac{\epsilon}{4}$. Let $\mu\in \EE_T(\XX)$. By Lemma \ref{lem:entropy} and Theorem \ref{thm:variantional principa open cover shapira},  
    \begin{equation*}
	    \begin{split}
	      \sup_{\mu \in \EE_T(\XX)} \inf_{\diam(\UU)\le \epsilon}h_\mu^S (\UU)
	        &\le  \sup_{\mu\in \EE_T(\XX)} h_\mu^S(\UU_0) =h_{\top}(\UU_0, T)\\
	        &\le S(\XX, \text{\rm Leb}(\UU_0))\le S(\XX, \frac{\epsilon}{4}).
	    \end{split}
	\end{equation*}
    
    On the other hand, it is clear that for any finite cover $\UU$ with $\diam{(\UU)}\le \frac{\epsilon}{8}$, the cover $\UU^n$ refines $\UU_0^n$ and as a consequence $\NN_\mu(\UU^n, \delta)\ge \NN_\mu(\UU_0^n, \delta)$ for any $0<\delta<1$. Thus $\inf_{\diam(\UU)\le \frac{\epsilon}{8}}h_\mu^S (\UU)\ge h_\mu^S (\UU_0)$. Then by Lemma \ref{lem:entropy} and Theorem \ref{thm:variantional principa open cover shapira}, we have
    \begin{equation*}
	    \begin{split}
	        	\sup_{\mu\in \EE_T(\XX)}\inf_{\diam(\UU)\le \frac{\epsilon}{8}}h_\mu^S (\UU)
	        &\ge  \sup_{\mu\in \EE_T(\XX)} h_\mu^S(\UU_0) =h_{\top}(\UU_0, T)\\
	        &\ge S(\XX, 3\text{\rm diam}(\UU_0))\ge S(\XX, 3\epsilon).
	    \end{split}
	\end{equation*}
	We conclude that $$S(\XX, 12\epsilon)\le \sup_{\mu\in \EE_T(\XX)}\inf_{\diam(\UU)\le \epsilon}h_\mu^S (\UU)\le S(\XX, \frac{\epsilon}{4})$$ for any $\epsilon>0$. Therefore we complete the proof by the definition of metric mean dimension.
\end{proof}

\section{Variational principle II: Katok's entropy}\label{sec:Variational principle II}
Let $(\XX, d)$ be a compact metric space. For $\delta\in (0,1)$, $n\in \N$ and $\epsilon>0$, define $\widetilde{N}_{\mu}^\delta(n, \epsilon)$ to be  the smallest number of sets with diameter at most $\epsilon$ in the metric $d_n$ whose union has $\mu$-measure larger than $\delta$. Recall that $N_{\mu}^\delta(n, \epsilon)$ is the smallest number of any $(n, \epsilon)$-dynamical balls whose union has $\mu$-measure larger
than $\delta$.  Clearly,
\begin{equation}\label{eq：delta}
    \widetilde{N}_{\mu}^\delta(n, 2\epsilon) \le {N}_{\mu}^\delta(n, \epsilon) \le  \widetilde{N}_{\mu}^\delta(n, \epsilon).
\end{equation}
\begin{lem}\label{lem:delta covering number}
    Let $(\XX, d, T)$ be a topological dynamical system.  Let $\mu$ be an ergodic measure. Let $\UU$ be a finite open cover of $\XX$ with $\diam(\UU)\le \epsilon_1$ and $\Leb(\UU)\ge \epsilon_2$. Let $\delta\in (0,1)$. Then
    $$
    \widetilde{N}_{\mu}^\delta(n, \epsilon_1) \le \NN_\mu(\UU^n, \delta)\le {N}_{\mu}^\delta(n, \epsilon_2).
    $$
\end{lem}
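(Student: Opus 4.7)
The plan is to prove both inequalities by direct set-theoretic arguments, exploiting separately the two metric hypotheses on $\UU$: $\diam(\UU) \le \epsilon_1$ for the left-hand inequality and $\Leb(\UU) \ge \epsilon_2$ for the right-hand one. The dynamical quantities $\widetilde{N}_\mu^\delta(n, \cdot)$, $N_\mu^\delta(n, \cdot)$ and $\NN_\mu(\UU^n, \delta)$ are all defined as minimal cardinalities of covers of a set of prescribed $\mu$-measure, so it suffices in each case to take an optimal cover realizing one side and to build, member by member, a cover of comparable or smaller size realizing the other side.

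For the left inequality, the key observation is that every element of $\UU^n$ has $d_n$-diameter at most $\epsilon_1$. Indeed, an arbitrary $A \in \UU^n$ has the form $A = \bigcap_{j=0}^{n-1} T^{-j} U_{i_j}$ with $U_{i_j} \in \UU$. For $x, y \in A$ and each $j$ we have $T^j x, T^j y \in U_{i_j}$, hence $d(T^j x, T^j y) \le \diam(U_{i_j}) \le \epsilon_1$; taking the maximum over $j$ gives $d_n(x,y) \le \epsilon_1$. Consequently any sub-collection of $\UU^n$ of size $\NN_\mu(\UU^n, \delta)$ that covers a set of $\mu$-measure at least $\delta$ is automatically a cover of that set by sets of $d_n$-diameter at most $\epsilon_1$, yielding $\widetilde{N}_\mu^\delta(n, \epsilon_1) \le \NN_\mu(\UU^n, \delta)$.

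For the right inequality, the plan is to use the Lebesgue number to bootstrap each dynamical ball to an element of $\UU^n$ containing it. Given $x \in \XX$ and $j \in \{0, \ldots, n-1\}$, the open ball $B(T^j x, \epsilon_2)$ is contained in some $U_{i_j} \in \UU$ because $\Leb(\UU) \ge \epsilon_2$. Then for every $y \in B_n(x, \epsilon_2)$ one has $d(T^j x, T^j y) < \epsilon_2$, so $T^j y \in U_{i_j}$, which means $y \in \bigcap_{j=0}^{n-1} T^{-j} U_{i_j} \in \UU^n$. Hence $B_n(x, \epsilon_2)$ is contained in an element of $\UU^n$, and an optimal cover of a set of $\mu$-measure larger than $\delta$ by $N_\mu^\delta(n, \epsilon_2)$ dynamical balls transfers, ball by ball, into a cover by at most the same number of elements of $\UU^n$, giving $\NN_\mu(\UU^n, \delta) \le N_\mu^\delta(n, \epsilon_2)$.

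The argument is essentially combinatorial and the main point to monitor is the small cosmetic asymmetry between the definitions, namely that $\NN_\mu(\UU^n, \delta)$ asks for a covered set of $\mu$-measure $\ge \delta$ while $\widetilde{N}_\mu^\delta$ and $N_\mu^\delta$ ask for $\mu$-measure $> \delta$. Since each of the constructions above merely enlarges the covering sets and therefore preserves the measure of the covered set, both inequalities remain valid under either convention, so no obstacle arises.
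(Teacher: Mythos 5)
Your proof is correct and follows essentially the same route as the paper: the left inequality because each cell of $\UU^n$ has $d_n$-diameter at most $\epsilon_1$, and the right inequality by using the Lebesgue number to place each $(n,\epsilon_2)$-dynamical ball inside a cell of $\UU^n$. You merely spell out the details the paper calls ``trivial'' and flag the harmless $\ge\delta$ versus $>\delta$ convention mismatch, which the paper silently ignores.
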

\begin{proof}
    The inclusion  $\widetilde{N}_{\mu}^\delta(n, \epsilon_1) \le \NN_\mu(\UU^n, \delta)$ is trivial. Let $F$ be a collection of $(n, \epsilon_2)$-dynamical balls with $\sharp F={N}_{\mu}^\delta(n, \epsilon_2)$ whose union has $\mu$-measure larger than $\delta$. Then for each $B\in F$, there is $U_B\in \UU^n$ such that $B\in U_B$. Then the union of $U_B, B\in F$, has $\mu$-measure larger than $\delta$. Thus $\NN_\mu(\UU^n, \delta)\le {N}_{\mu}^\delta(n, \epsilon_2).$
\end{proof}

Our second result on variational principle is as follow.
\begin{thm}\label{thm:main Katok}
    Let $(\XX, d, T)$ be a topological dynamical system. Then for every $\delta\in (0,1)$,
	$$\overline{\mdim}_M (\XX, d, T)=\limsup_{\epsilon\to 0} \frac{1}{\log \frac{1}{\epsilon}} \sup_{\mu \in \EE_T(\XX)} h_\mu^K (\epsilon, \delta),$$
	and
	$$\underline{\mdim}_M (\XX, d, T)=\liminf_{\epsilon\to 0} \frac{1}{\log \frac{1}{\epsilon}} \sup_{\mu \in \EE_T(\XX)} h_\mu^K (\epsilon, \delta). $$
\end{thm}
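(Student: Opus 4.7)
The plan is to exploit Theorem \ref{thm:main shapira} as a bridge: sandwich the quantity $\inf_{\diam(\UU)\le\epsilon}h_\mu^S(\UU)$ between two Katok-entropy quantities whose scales differ only by a bounded multiplicative factor, then observe that such a factor disappears after dividing by $\log(1/\epsilon)$ and taking $\limsup_{\epsilon\to 0}$ (respectively $\liminf_{\epsilon\to 0}$). Throughout the argument I fix $\delta\in(0,1)$.

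First I would convert Lemma \ref{lem:delta covering number} and inequality (\ref{eq：delta}) into entropy estimates by taking $\frac{1}{n}\log$ and $\limsup_{n\to\infty}$. Writing $\widetilde{h}_\mu^K(\epsilon,\delta):=\limsup_n\frac{1}{n}\log\widetilde{N}_\mu^\delta(n,\epsilon)$, the cover inequalities become $\widetilde{h}_\mu^K(\epsilon_1,\delta)\le h_\mu^S(\UU)\le h_\mu^K(\epsilon_2,\delta)$ whenever $\diam(\UU)\le\epsilon_1$ and $\Leb(\UU)\ge\epsilon_2$, while (\ref{eq：delta}) yields $\widetilde{h}_\mu^K(2\epsilon,\delta)\le h_\mu^K(\epsilon,\delta)\le \widetilde{h}_\mu^K(\epsilon,\delta)$. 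Monotonicity in the radius argument of $\widetilde{N}_\mu^\delta(n,\cdot)$ means that for \emph{any} open cover $\UU$ with $\diam(\UU)\le\epsilon$, $h_\mu^S(\UU)\ge\widetilde{h}_\mu^K(\epsilon,\delta)\ge h_\mu^K(\epsilon,\delta)$.

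Next I invoke Lemma \ref{lem:open cover} to pick, for each $\epsilon>0$, a concrete cover $\UU_0$ with $\diam(\UU_0)\le\epsilon$ and $\Leb(\UU_0)\ge\epsilon/4$. The second cover inequality from Lemma \ref{lem:delta covering number} then gives $h_\mu^S(\UU_0)\le h_\mu^K(\epsilon/4,\delta)$, so $\inf_{\diam(\UU)\le\epsilon}h_\mu^S(\UU)\le h_\mu^K(\epsilon/4,\delta)$. Combining with the previous paragraph I obtain the two-sided estimate
\begin{equation*}
h_\mu^K(\epsilon,\delta)\;\le\;\inf_{\diam(\UU)\le\epsilon}h_\mu^S(\UU)\;\le\;h_\mu^K(\epsilon/4,\delta),
\end{equation*}
valid for every $\mu\in\EE_T(\XX)$ and every $\epsilon>0$. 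Taking $\sup_{\mu\in\EE_T(\XX)}$ preserves the two-sided inequality.

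Finally I divide through by $\log(1/\epsilon)$ and take $\limsup_{\epsilon\to 0}$. On the left and right we obtain, respectively, the desired $\limsup_{\epsilon\to 0}\frac{1}{\log(1/\epsilon)}\sup_\mu h_\mu^K(\epsilon,\delta)$ and the corresponding expression with $\epsilon/4$; but after substituting $\epsilon'=\epsilon/4$ the prefactor $\log(1/\epsilon)^{-1}$ differs from $\log(1/\epsilon')^{-1}$ by the bounded multiplicative ratio $\log(1/\epsilon')/\log(4/\epsilon')\to 1$, so both limsups coincide. The middle term equals $\overline{\mdim}_M(\XX,d,T)$ by Theorem \ref{thm:main shapira}, giving the upper-dimension statement. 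The lower-dimension statement follows by the same argument with $\liminf_{\epsilon\to 0}$ in place of $\limsup_{\epsilon\to 0}$. I do not foresee a genuine obstacle here: everything reduces to Lemma \ref{lem:delta covering number}, Lemma \ref{lem:open cover}, Theorem \ref{thm:main shapira} and the innocuous $\log$-scale bookkeeping — the only point requiring some care is ensuring that the cover-of-diameter-$\epsilon$ with Lebesgue number $\ge\epsilon/4$ is used on the \emph{upper} side while an arbitrary cover of diameter $\le\epsilon$ controls the \emph{lower} side, so that we end up with a scale change by a uniform constant rather than anything diameter-dependent.
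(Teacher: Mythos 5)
Your argument is correct and is essentially the paper's own proof: the heart of both is the sandwich $h_\mu^K(\epsilon,\delta)\le h_\mu^S(\UU)\le h_\mu^K(\epsilon/4,\delta)$ obtained from Lemma \ref{lem:delta covering number}, inequality \eqref{eq：delta} and the cover from Lemma \ref{lem:open cover}. The only (harmless) difference is organizational: you conclude by citing Theorem \ref{thm:main shapira} and absorbing the $\epsilon\mapsto\epsilon/4$ rescaling into the $\log\frac{1}{\epsilon}$ normalization, whereas the paper re-runs the $S(\XX,\cdot)$ bounds from Lemma \ref{lem:entropy} and Theorem \ref{thm:variantional principa open cover shapira} inline to trap $\sup_\mu h_\mu^K(\epsilon,\delta)$ between $S(\XX,12\epsilon)$ and $S(\XX,\frac{\epsilon}{4})$.
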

\begin{proof}
    By Lemma \ref{lem:open cover}, we can find a finite open cover $\UU$ of $\XX$ with $\diam(\UU)\le \epsilon$ and $\Leb(\UU)\ge \frac{\epsilon}{4}$. Fix $\delta\in (0,1)$. Let $\mu\in \EE_T(\XX)$. Let $\sigma>0$. By Lemma \ref{lem:delta covering number} and \eqref{eq：delta}, we have 
    \begin{equation*}
        {N}_{\mu}^\delta(n, \epsilon) \le \NN_\mu(\UU^n, \delta) \le {N}_{\mu}^\delta(n, \frac{\epsilon}{4}).
    \end{equation*}
    It follows that
    \begin{equation}\label{eq:Katok and Shapira}
        h_{\mu}^{K}(\epsilon, \delta) \le h_\mu^S(\UU) \le h_{\mu}^{K}(\frac{\epsilon}{4}, \delta).
    \end{equation}
   Combining this with Lemma \ref{lem:entropy} and Theorem \ref{thm:variantional principa open cover shapira}, we have
	\begin{equation*}
	    \begin{split}
	\sup_{\mu\in \EE_T(\XX)}h_{\mu}^{K}(\epsilon, \delta)
	&\le \sup_{\mu\in \EE_T(\XX)} h_\mu^S(\UU)=h_{\top}(\UU, T)\\
	&\le S(\XX, \text{\rm Leb}(\UU))\le S(\XX, \frac{\epsilon}{4}).
	\end{split}
	\end{equation*}
    Similarly,
    \begin{equation*}
	    \begin{split}
	        	\sup_{\mu\in \EE_T(\XX)}h_{\mu}^{K}(\frac{\epsilon}{4}, \delta)
	        &\ge  \sup_{\mu\in \EE_T(\XX)} h_\mu^S(\UU) =h_{\top}(\UU, T)\\
	        &\ge S(\XX, 3\text{\rm diam}(\UU))\ge S(\XX, 3\epsilon).
	    \end{split}
	\end{equation*}
	We conclude that $S(\XX, 12\epsilon)\le \sup_{\mu\in \EE_T(\XX)}h_{\mu}^{K}(\epsilon, \delta)\le S(\XX, \frac{\epsilon}{4})$ for any $\epsilon>0$ and $0<\delta<1$. Therefore we complete the proof by the definition of metric mean dimension.
\end{proof}

\section{Variational principle III: Brin-Katok entropy}\label{sec:Variational principle}

In this section, we show the variational principle for metric mean dimension in terms of Brin-Katok local entropy, which also gives a positive answer to  Problem 3 in \cite{gutman2020around}.

Let $(\XX, d, T)$ be a topological dynamical system. For a cover $\UU$ of $\XX$ and $\mu\in \MM_T(\XX)$, we define
$$
h_\mu^{BK}(x, \UU):=\limsup_{n\to \infty} -\frac{1}{n} \log \mu(\UU_x^n),
$$
where $\UU_x^n$ is the union of all cells of the cover $\UU^n$ which contain $x$. If additionally $\mu$ is ergodic, then  $h_{\mu}^{BK}(x, \UU)$ is a constant for $\mu$-a.e. $x$, denoted by $h_\mu^{BK}(\UU)$. Moreover, if $\UU$ is a partition, then $h_\mu^{BK}(\UU)=h_\mu(\UU)$ by Shannon-McMillan-Breiman theorem

\begin{lem}\label{lem:local and cover}
    Let $(\XX, d, T)$ be a topological dynamical system. Let $\UU$ be a finite cover of $\XX$. Let $\epsilon_1, \epsilon_2>0$. Suppose diam$(\UU)\le \epsilon_1$ and Leb$(\UU)\ge \epsilon_2$. Then 
    $$
    h_{\mu}^{BK}(\epsilon_1)\le h_{\mu}^{BK}(\UU) \le h_{\mu}^{BK}(\epsilon_2),    
    $$
    for any $\mu\in \EE_T(\XX)$.
\end{lem}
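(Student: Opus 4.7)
The plan is to derive both inequalities from two elementary set-theoretic inclusions between the cell $\UU_x^n$ and suitable Bowen balls, then to take $\mu$-measure and pass to $\limsup_n \bigl(-\tfrac{1}{n}\log(\cdot)\bigr)$. The ergodicity of $\mu$ enters only at the end, to identify $h_\mu^{BK}(x,\UU)$ and $h_\mu^{BK}(x,\epsilon)$ with the $\mu$-a.e.\ constants denoted $h_\mu^{BK}(\UU)$ and $h_\mu^{BK}(\epsilon)$.

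For the upper bound $h_\mu^{BK}(\UU)\le h_\mu^{BK}(\epsilon_2)$, I would use the Lebesgue-number hypothesis. Given any $y\in B_n(x,\epsilon_2)$, for each $0\le k\le n-1$ the open ball $B(T^k x,\epsilon_2)$ is contained in some $U_k\in\UU$ because $\Leb(\UU)\ge\epsilon_2$, and this $U_k$ contains both $T^k x$ and $T^k y$. Hence $x$ and $y$ share the cell $\bigcap_{k=0}^{n-1} T^{-k} U_k \in \UU^n$, giving $B_n(x,\epsilon_2)\subset\UU_x^n$. Monotonicity of $\mu$ and the usual asymptotic step then immediately deliver $h_\mu^{BK}(x,\epsilon_2)\ge h_\mu^{BK}(x,\UU)$.

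For the lower bound $h_\mu^{BK}(\epsilon_1)\le h_\mu^{BK}(\UU)$, I would use the diameter hypothesis in the symmetric way. If $y\in\UU_x^n$, then $x$ and $y$ share a cell $\bigcap_{k=0}^{n-1} T^{-k} U_k$, so $d(T^k x,T^k y)\le\diam(U_k)\le\epsilon_1$, and therefore $d_n(x,y)\le\epsilon_1$. Thus $\UU_x^n\subset\{y:d_n(x,y)\le\epsilon_1\}\subset B_n(x,\epsilon_1+\eta)$ for every $\eta>0$, and the analogous asymptotic step yields $h_\mu^{BK}(x,\UU)\ge h_\mu^{BK}(x,\epsilon_1+\eta)$. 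Letting $\eta\to 0^+$ closes the argument.

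I do not expect any serious obstacle; the proof is purely combinatorial. The only mild technicality is the open-versus-closed Bowen-ball discrepancy that arises in the lower bound, and it is absorbed by the $\eta$-inflation above. In the eventual application to the variational principle for $\overline{\mdim}_M$, one takes $\sup_\mu$ and a further $\limsup_{\epsilon\to 0}$, both of which easily swallow this arbitrarily small enlargement.
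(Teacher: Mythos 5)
Your proposal is correct and is essentially the paper's proof: the paper's entire argument is the one-line sandwich $B_n(x,\epsilon_2)\subset \UU_x^n\subset B_n(x,\epsilon_1)$, which is exactly the pair of inclusions you establish (via the Lebesgue number for the left one and the diameter bound for the right one), followed by taking measures, $\limsup_n$, and invoking ergodicity to pass to the a.e.\ constants. The only remark worth making is that your $\eta$-inflation is in fact \emph{more} careful than the paper (which asserts $\UU_x^n\subset B_n(x,\epsilon_1)$ with the open Bowen ball, true only for the closed ball), though strictly speaking letting $\eta\to 0^+$ yields $h_\mu^{BK}(\UU)\ge \lim_{\eta\to 0^+}h_\mu^{BK}(\epsilon_1+\eta)$, which could a priori be smaller than $h_\mu^{BK}(\epsilon_1)$ at a discontinuity point; as you correctly note, this boundary issue is immaterial in the application to the variational principle.
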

\begin{proof}
    It follows by the inclusion $B_n(x, \epsilon_2)\subset \UU^n_x\subset B_n(x,\epsilon_1)$ for every $n\in \N$ and $x\in \XX$. 
\end{proof}

Now we present our main result in this section.
\begin{thm}\label{thm:main 1}
	Let $(\XX, d, T)$ be a topological dynamical system. Then
	$$\overline{\mdim}_M (\XX, d, T)=\limsup_{\epsilon\to 0} \frac{1}{\log \frac{1}{\epsilon}} \sup_{\mu \in \EE_T(\XX)} h_\mu^{BK} (\epsilon),$$
	and
	$$\underline{\mdim}_M (\XX, d, T)=\liminf_{\epsilon\to 0} \frac{1}{\log \frac{1}{\epsilon}} \sup_{\mu \in \EE_T(\XX)} h_\mu^{BK} (\epsilon). $$
\end{thm}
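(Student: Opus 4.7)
The plan is to sandwich $\sup_{\mu\in\EE_T(\XX)}h_\mu^{BK}(\epsilon)$ between $S(\XX,c_1\epsilon)$ and $S(\XX,c_2\epsilon)$ for absolute constants $c_1>c_2>0$, mirroring the structure already used to prove Theorems \ref{thm:main shapira} and \ref{thm:main Katok}. Once that sandwich is in place, dividing by $\log\tfrac{1}{\epsilon}$ and passing to $\limsup$ (resp.\ $\liminf$) as $\epsilon\to 0$ immediately recovers $\overline{\mdim}_M$ (resp.\ $\underline{\mdim}_M$) by definition, since a universal constant inside the argument of $S(\XX,\cdot)$ contributes only a vanishing correction to the denominator.

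For the upper bound I would fix, via Lemma \ref{lem:open cover}, a finite open cover $\UU$ with $\diam(\UU)\le\epsilon$ and $\Leb(\UU)\ge\epsilon/4$. Lemma \ref{lem:local and cover} then gives $h_\mu^{BK}(\epsilon)\le h_\mu^{BK}(\UU)$, and the only new ingredient on this side is the inequality
\begin{equation*}
h_\mu^{BK}(\UU)\le h_\mu^S(\UU).
\end{equation*}
This drops out of Shannon--McMillan--Breiman: for any Borel partition $\PP$ refining $\UU$, the cell $\PP_x^n$ lies inside some cell of $\UU^n$ containing $x$, hence inside $\UU_x^n$, so $\mu(\PP_x^n)\le\mu(\UU_x^n)$ and therefore $h_\mu(\PP)=h_\mu^{BK}(\PP)\ge h_\mu^{BK}(\UU)$; taking the infimum over $\PP\succ\UU$ and invoking Theorem \ref{thm:shapira} yields the claim. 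Taking the supremum over $\mu\in\EE_T(\XX)$ and chaining with Theorem \ref{thm:variantional principa open cover shapira} and Lemma \ref{lem:entropy} produces $\sup_\mu h_\mu^{BK}(\epsilon)\le h_{\top}(\UU,T)\le S(\XX,\epsilon/4)$.

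For the lower bound I would route through the already-established Katok principle (Theorem \ref{thm:main Katok}) by means of the pointwise bridge
\begin{equation*}
h_\mu^K(\epsilon,\delta)\le h_\mu^{BK}(\epsilon/2)\qquad\text{for every ergodic }\mu\text{ and every }\delta\in(0,1).
\end{equation*}
Writing $h=h_\mu^{BK}(\epsilon/2)$ and fixing $\eta>0$, the sets $F_N=\{x:\mu(B_n(x,\epsilon/2))\ge e^{-n(h+\eta)}\text{ for all }n\ge N\}$ increase to a full-measure set (directly from the definition of $\limsup$), so one can pick $N$ and a subset $G\subseteq F_N$ with $\mu(G)>(1+\delta)/2$. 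A maximal $(n,\epsilon)$-separated subset $E_n\subseteq G$ then satisfies $|E_n|\le e^{n(h+\eta)}$ because the $(n,\epsilon/2)$-balls centered at $E_n$ are pairwise disjoint, while the $(n,\epsilon)$-balls centered at $E_n$ cover $G$ and hence a set of $\mu$-measure larger than $\delta$. Taking $\limsup_n$ and letting $\eta\to 0$ gives the bridge. Combining with $\sup_\mu h_\mu^K(\epsilon,\delta)\ge S(\XX,12\epsilon)$ from Theorem \ref{thm:main Katok} and substituting $\epsilon'=\epsilon/2$ yields $\sup_\mu h_\mu^{BK}(\epsilon')\ge S(\XX,24\epsilon')$, completing the sandwich.

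The main technical obstacle is precisely this Katok-to-Brin--Katok bridge. Its delicate point is that Brin--Katok supplies only $\limsup$-type pointwise information, so Egorov in its usual form does not apply; instead one must exploit monotonicity of the sets $F_N$ in $N$ to secure a set of definite measure on which the lower bound $\mu(B_n(x,\epsilon/2))\ge e^{-n(h+\eta)}$ holds uniformly for \emph{all} $n\ge N$. Everything else is bookkeeping with already-available tools, namely Lemmas \ref{lem:entropy}, \ref{lem:open cover}, \ref{lem:local and cover} and Theorems \ref{thm:variantional principa open cover shapira} and \ref{thm:main Katok}.
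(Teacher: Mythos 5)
Your proposal is correct and follows essentially the same route as the paper: the upper bound via Lemma \ref{lem:open cover}, Lemma \ref{lem:local and cover} and the local variational principle for open covers, and the lower bound via the bridge $h_\mu^K(\cdot,\delta)\le h_\mu^{BK}(\cdot)$ obtained from a maximal separated set inside a set of measure $>\delta$ on which the Brin--Katok $\limsup$ bound holds, followed by an appeal to Theorem \ref{thm:main Katok}. The only cosmetic differences are that the paper phrases the upper bound directly through $\inf_{\PP\succ\UU}h_\mu(\PP)$ rather than $h_\mu^S(\UU)$ (identical by Theorem \ref{thm:shapira}), and covers its set by balls of radius $3\epsilon$ rather than $\epsilon$ to absorb the open-versus-closed ball issue for maximal separated sets --- a constant that is immaterial after dividing by $\log\frac{1}{\epsilon}$.
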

\begin{proof}
	Fix $\epsilon>0$. By Lemma \ref{lem:open cover}, we can find a finite open cover $\UU$ of $\XX$ with $\diam(\UU)\le \epsilon$ and $\Leb(\UU)\ge \frac{\epsilon}{4}$. Since $\diam(\PP)\le \epsilon$ for any partition $\PP\succ \UU$, by Lemma \ref{lem:local and cover}, we have $h_{\mu}^{BK}(\epsilon)\le \inf_{\PP\succ \UU} h_{\mu}(\PP)$ for any $\mu\in \EE_T(\XX)$. Then by Lemma \ref{lem:entropy} and Theorem \ref{thm:variant principal open cover}
	\begin{equation*}
	    \begin{split}
	\sup_{\mu\in \EE_T(\XX)}h_{\mu}^{BK}(\epsilon)
	&\le \sup_{\mu\in \EE_T(\XX)} \inf_{\PP\succ \UU} h_{\mu}(\PP)=h_{\top}(\UU, T)\\
	&\le S(\XX, \text{\rm Leb}(\UU))\le S(\XX, \frac{\epsilon}{4}).
	\end{split}
	\end{equation*}
	This implies LHS $\ge$ RHS.

	It remains to show LHS $\le$ RHS. Let $\mu\in \EE_T(\XX)$. Let $\sigma>0$ and let
    $$
    G_{n, \sigma}=\{x\in \XX: -\frac{1}{n}\log \mu(B_n(x,\epsilon))<h_{\mu}^{BK}(\epsilon)+\sigma \}.
    $$
    Since $\mu(\cup_{N\ge 1} \cap_{n\ge N}G_{n,\sigma})=1$ and $\cap_{n\ge N}G_{n,\sigma}$ is increasing as $N$ grows, we have
    $
    \lim_{N\to \infty}\mu(\cap_{n\ge N}G_{n,\sigma})=1.
    $
    Let $\delta\in (0,1). $ Then there exists $n_0\in \N$ such that for any $n\ge n_0$, $\mu(G_{n, \sigma})>\delta$. Pick arbitrary $n\ge n_0$. Let $H_n$ be a maximal $(n,2\epsilon)$-separated set of $G_{n, \sigma}$. It follows that $H_n$ is a $(n, 2\epsilon)$-spanning set of $G_{n, \sigma}$. Thus the union of the balls $B_n(x, 3\epsilon), x\in H_n,$ cover $G_{n,\sigma}$. It implies that
    $$
    \mu(\cup_{x\in H_n }B_n(x, 3\epsilon))\ge \mu(G_{n, \sigma})>\delta.
    $$
    That is to say, $\sharp H_n\ge N_{\mu}^\delta(n, 3\epsilon)$. On the other hand, since $H_n$ is the $(n,2\epsilon)$-separated set, the balls $B_n(x, \epsilon), x\in H_n,$ are disjoint. It follows that
    $$
    1\ge \mu(\cup_{x\in H_n }B_n(x, \epsilon))=\sum_{x\in H_n}\mu(B_n(x, \epsilon))\ge \sharp H_n e^{-n(h_{\mu}^{BK}(\epsilon)+\sigma)},
    $$
    where the last inequality is due to the fact that $H_n\subset G_{n,\sigma}$. Then 
    $
    \sharp H_n \le e^{n(h_{\mu}^{BK}(\epsilon)+\sigma)}.
   $
    Therefore we get 
    $$
    N_{\mu}^\delta(n, 3\epsilon)\le e^{n(h_{\mu}^{BK}(\epsilon)+\sigma)},
    $$
	and consequently 
	$$
	h_\mu^K(3\epsilon, \delta)\le h_\mu^{BK}(\epsilon)+\sigma.
	$$
	Since $\sigma$ is chosen arbitrarily, by Theorem \ref{thm:main Katok}, this completes the proof.
\end{proof}

\begin{ex}
	Let $\XX=[0,1]^\Z$ be the infinite product of the unit interval. Let $\sigma: \XX \to \XX$ be the (left) shift defined by $(x_n)_{n\in \Z} \mapsto (x_{n+1})_{n\in \Z}$. Define a distance $d$ on $\XX$ by
	$$
	d(x,y)=\sum_{n\in \Z} 2^{-|n|} |x_n-y_n|.
	$$
	It is known that $\mdim_M ([0,1]^\Z, d, \sigma )=1$ (see for instance \cite[Example 1.1]{lindenstrauss2019double}). Let $\LL$ be the Lebesgue measure on $[0,1]$ and $\mu=\LL^{\otimes \Z}$. We will calculate $h_{\mu}^{BK}(\epsilon)$ for $\epsilon>0$. 
	
	Let $\epsilon>0$ and $x\in [0,1]^\Z$. Set $\ell=\lceil \log_2 \frac{4}{\epsilon} \rceil$. Then $\sum_{|n|>\ell} 2^{-|n|}\le \epsilon/2$.
	Let 
	$$
	I_n(x, \epsilon):=\{y\in [0,1]^\Z: y_{k}\in x_{k}+[-\frac{\epsilon}{6}, \frac{\epsilon}{6}], \forall -\ell\le k\le n+\ell \},
	$$
	and 
	$$
	J_n(x, \epsilon):=\{y\in [0,1]^\Z: y_{k}\in x_{k}+[-\epsilon, \epsilon], \forall 0\le k\le n\}.
	$$
	 It is easy to see that
	$$
	I_n(x, \epsilon) \subset B_n(x, \epsilon)\subset J_n(x, \epsilon).
	$$
	Since $\mu(I_n(x, \epsilon))\ge \left( \frac{\epsilon}{6} \right)^{n+\ell}$ and $\mu(J_n(x, \epsilon))\le \left( 4\epsilon \right)^{n}$, we obtain that
	$$
	\log \frac{1}{4\epsilon} \le h_\mu^{BK}(\epsilon)\le \log \frac{3}{\epsilon}.
	$$
	Therefore
	$$
	\lim\limits_{\epsilon\to 0} \frac{h_\mu^{BK}(\epsilon)}{\log \frac{1}{\epsilon}} =1 =\mdim_M ([0,1]^\Z, d, \sigma ).
	$$
\end{ex}

\section{Discussion on lower Brin-Katok local entropy and lower Katok's entropy}
Let $(\XX, d, T)$ be a topological dynamical system. For an invariant measure $\mu\in \MM_T(\XX)$ and a point $x\in \XX$, analogous to Brin-Katok entropy, we define the {\it lower Brin-Katok local entropy} by
$$
\underline{h}_\mu^{BK}(x, \epsilon):=\liminf_{n\to \infty} -\frac{1}{n} \log \mu(B_n(x, \epsilon)).
$$
The limit $\lim\limits_{\epsilon\to 0} \underline{h}_\mu^{BK}(x, \epsilon)$ is denoted by $\underline{h}_\mu^{BK}(x)$.
If additionally $\mu$ is ergodic, then $\underline{h}_\mu^{BK}(x, \epsilon)$ is a constant for $\mu$-a.e. $x$, denoted by $\underline{h}_\mu^{BK}(\epsilon)$ and as a consequence $\underline{h}_\mu^{BK}(x)=h_\mu(T)$. 

Recall that $N_{\mu}^\delta(n, \epsilon)$ is the smallest number of any $(n, \epsilon)$-dynamical balls whose union has $\mu$-measure larger
than $\delta$. Analogous to Katok's entropy, we define the {\it lower Katok's entropy} by
$$
\underline{h}_{\mu}^K(\epsilon, \delta)=\liminf_{n\to \infty} \frac{1}{n} \log N_\mu^\delta(n, \epsilon).   
$$

 For a cover $\UU$ of $\XX$ and $\mu\in \MM_T(\XX)$, we define
$$
\underline{h}_\mu(x, \UU):=\liminf_{n\to \infty} -\frac{1}{n} \log \mu(\UU_x^n).
$$
 If additionally $\mu$ is ergodic, then  $\underline{h}_{\mu}(x, \UU)$ is a constant for $\mu$-a.e. $x$, denoted by $\underline{h}_\mu(\UU)$. 
Same as Lemma \ref{lem:local and cover}, we have the following lemma.
\begin{lem}\label{lem:local and cover 2}
    Let $(\XX, d, T)$ be a topological dynamical system. Let $\UU$ be a finite open cover of $\XX$. Let $\epsilon_1, \epsilon_2>0$. Suppose that diam$(\UU)\le \epsilon_1$ and Leb$(\UU)\ge \epsilon_2$. Then 
    $$
    \hh_{\mu}^{BK}(\epsilon_1)\le \hh_{\mu}(\UU) \le \hh_{\mu}^{BK}(\epsilon_2), 
    $$
    for any $\mu\in \EE_T(\XX)$. 
\end{lem}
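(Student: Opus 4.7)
The plan is to reproduce the argument of Lemma \ref{lem:local and cover} essentially verbatim, with $\limsup$ replaced by $\liminf$ throughout. The single geometric input needed is the set-theoretic sandwich
$$
B_n(x, \epsilon_2) \subset \UU^n_x \subset B_n(x, \epsilon_1),
$$
valid for every $x \in \XX$ and every $n \in \N$, which is exactly the inclusion invoked in Lemma \ref{lem:local and cover}. The left inclusion uses $\Leb(\UU) \ge \epsilon_2$: for each $0 \le i \le n-1$ the ball $B(T^i x, \epsilon_2)$ lies in some $U_i \in \UU$ containing $T^i x$, so intersecting the preimages $T^{-i}U_i$ produces an element of $\UU^n$ through $x$ which contains $B_n(x, \epsilon_2)$ and is in turn contained in $\UU_x^n$. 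The right inclusion uses $\diam(\UU) \le \epsilon_1$: any $y$ in an element of $\UU^n$ through $x$ satisfies $d(T^i x, T^i y) \le \diam(\UU) \le \epsilon_1$ for $0 \le i \le n-1$.

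From here I apply $-\frac{1}{n}\log \mu(\cdot)$, which reverses inclusions, to obtain the pointwise chain
$$
-\frac{1}{n}\log \mu(B_n(x, \epsilon_1)) \le -\frac{1}{n}\log \mu(\UU_x^n) \le -\frac{1}{n}\log \mu(B_n(x, \epsilon_2)),
$$
and then take $\liminf_{n\to\infty}$. Since $a_n \le b_n$ for all $n$ implies $\liminf_n a_n \le \liminf_n b_n$, the three liminfs inherit the same ordering, giving
$$
\underline{h}_\mu^{BK}(x, \epsilon_1) \le \underline{h}_\mu(x, \UU) \le \underline{h}_\mu^{BK}(x, \epsilon_2)
$$
for every $x \in \XX$. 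Finally, by the ergodicity statements recalled immediately before the lemma, each of the three quantities is a $\mu$-almost-sure constant, and evaluating the inequality on a common full-measure set yields the claim.

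I expect no real obstacle: the argument is a direct liminf analogue of its limsup counterpart, and monotonicity of $\liminf$ under pointwise inequalities is elementary. The only minor point to verify is that the three $\liminf$ quantities admit their almost-sure constant values on a common set of full measure, but this follows from $T$-invariance of the associated functions together with ergodicity of $\mu$, exactly as in the $\limsup$ case.
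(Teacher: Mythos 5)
Your proof is correct and takes exactly the route the paper intends: the paper gives no separate argument for this lemma, stating only that it is ``same as Lemma \ref{lem:local and cover}'', i.e., it follows from the inclusions $B_n(x,\epsilon_2)\subset \UU^n_x\subset B_n(x,\epsilon_1)$ together with monotonicity of $\liminf$, which is precisely what you wrote. Your extra care about evaluating the three almost-sure constants on a common full-measure set is a harmless (and correct) refinement of the same argument.
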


In the proof of Theorem \ref{thm:main 1} , we see that $h_\mu^K$ is bounded from above by $h_\mu^{BK}$. We show in the following proposition that $h_\mu^K$ is bounded from below by $\hh_\mu^{BK}$.

\begin{prop}\label{prop:1}
    Let $(\XX, d, T)$ be a topological dynamical system. Then for every $\delta\in (0,1)$ and $\epsilon>0$,
	$$
	h_\mu^K(\frac{\epsilon}{4}, \delta)\ge \hh_\mu^{BK}(\epsilon),  \forall \mu\in \EE_T(\XX).
	$$
\end{prop}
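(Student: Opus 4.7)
The plan is to mimic the second half of the proof of Theorem \ref{thm:main 1}, replacing the $\limsup$ condition on Brin--Katok balls by the corresponding $\liminf$ condition, and to extract the lower bound on $N_\mu^\delta(n,\epsilon/4)$ by a covering/triangle inequality argument in the Bowen metric $d_n$.

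First I would fix $\sigma>0$, set $L:=\underline{h}_\mu^{BK}(\epsilon)$, and introduce the good sets
$$
\widetilde G_{n,\sigma}:=\Bigl\{x\in \XX:\ \mu(B_n(x,\epsilon))<e^{-n(L-\sigma)}\Bigr\}.
$$
Since $\mu$ is ergodic, $\underline{h}_\mu^{BK}(x,\epsilon)$ equals $L$ $\mu$-a.e., so the definition of $\liminf$ gives that $\mu$-a.e.\ $x$ lies in $\widetilde G_{n,\sigma}$ for all sufficiently large $n$. Hence $\mu\bigl(\bigcap_{n\ge N}\widetilde G_{n,\sigma}\bigr)\to 1$ as $N\to\infty$, and I can choose $N_0$ so that $\mu(\widetilde G_{n,\sigma})>1-\delta/2$ for every $n\ge N_0$.

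Next, for each $n\ge N_0$, I take a collection of $(n,\epsilon/4)$-balls $\{B_n(y_j,\epsilon/4)\}_{j=1}^{K_n}$ realizing $K_n=N_\mu^\delta(n,\epsilon/4)$, and denote their union by $F_n$, so $\mu(F_n)>\delta$. The key geometric observation is a Bowen-metric triangle inequality: for every $j$ such that $B_n(y_j,\epsilon/4)\cap \widetilde G_{n,\sigma}\neq\emptyset$, picking $x_j$ in this intersection gives $B_n(y_j,\epsilon/4)\subset B_n(x_j,\epsilon/2)\subset B_n(x_j,\epsilon)$, hence $\mu(B_n(y_j,\epsilon/4))\le \mu(B_n(x_j,\epsilon))<e^{-n(L-\sigma)}$. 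Combining this with the inclusion--exclusion estimate $\mu(\widetilde G_{n,\sigma}\cap F_n)\ge \mu(\widetilde G_{n,\sigma})+\mu(F_n)-1>\delta/2$, I obtain
$$
\frac{\delta}{2}<\mu(\widetilde G_{n,\sigma}\cap F_n)\le \sum_{j=1}^{K_n}\mu\bigl(B_n(y_j,\epsilon/4)\cap \widetilde G_{n,\sigma}\bigr)\le K_n\,e^{-n(L-\sigma)}.
$$

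Rearranging yields $K_n\ge (\delta/2)\,e^{n(L-\sigma)}$; taking $\tfrac1n\log$ and $\limsup_{n\to\infty}$ produces $h_\mu^K(\epsilon/4,\delta)\ge L-\sigma$, and finally sending $\sigma\to 0$ finishes the proof. The main technical obstacle is the measure-theoretic bookkeeping in the first step: I must pick $N_0$ carefully so that $\mu(\widetilde G_{n,\sigma})$ is close enough to $1$ to ensure $\mu(\widetilde G_{n,\sigma}\cap F_n)>0$ quantitatively for \emph{all} $\delta\in(0,1)$, and the choice $1-\delta/2$ is what makes the lower bound survive with an explicit constant that is irrelevant after dividing by $n$.
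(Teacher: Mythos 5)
Your proof is correct. It follows the same counting scheme as the paper's argument: a good set of measure $>1-\delta/2$, the inclusion--exclusion bound $\mu(\widetilde G_{n,\sigma}\cap F_n)>\delta/2$, an exponential upper bound on the measure of each ball meeting the good set, and the resulting lower bound $N_\mu^\delta(n,\epsilon/4)\ge(\delta/2)e^{n(L-\sigma)}$. The one place you genuinely diverge is in how that exponential upper bound is obtained. The paper routes everything through the auxiliary open cover $\UU$ of Lemma \ref{lem:open cover}: the good set is defined via $\mu(\UU_x^n)$, the Lebesgue number $\Leb(\UU)\ge\epsilon/4$ forces an $(n,\epsilon/4)$-ball meeting the good set into $\UU_x^n$, and Lemma \ref{lem:local and cover 2} is then needed at the end to convert $\underline{h}_\mu(\UU)$ back into $\underline{h}_\mu^{BK}(\epsilon)$. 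You instead define the good set directly via $\mu(B_n(x,\epsilon))$ and use the triangle inequality for $d_n$ to get $B_n(y_j,\epsilon/4)\subset B_n(x_j,\epsilon/2)\subset B_n(x_j,\epsilon)$ whenever $x_j\in B_n(y_j,\epsilon/4)\cap\widetilde G_{n,\sigma}$. This buys you a cleaner, more self-contained argument that dispenses with Lemmas \ref{lem:open cover} and \ref{lem:local and cover 2} entirely, and it in fact yields the slightly stronger inequality $h_\mu^K(\epsilon/2,\delta)\ge\underline{h}_\mu^{BK}(\epsilon)$; the cover-based route is what the paper reuses elsewhere (e.g.\ for Shapira's entropy), which is presumably why it is phrased that way there.
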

\begin{proof}
    By Lemma \ref{lem:open cover}, we can find a finite open cover $\UU$ of $\XX$ with $\diam(\UU)\le \epsilon$ and $\Leb(\UU)\ge \frac{\epsilon}{4}$. Fix $\delta\in (0,1)$. Let $\mu\in \EE_T(\XX)$. Let $\sigma>0$. Let 
    $$
    F_{n, \sigma}=\{x\in \XX: -\frac{1}{n}\log \mu(\UU_x^n)>\hh_{\mu}(\UU)-\sigma \}.
    $$
    Since $\mu(\cup_{N\ge 1} \cap_{n\ge N}F_{n,\sigma})=1$ and $\cap_{n\ge N}F_{n,\sigma}$ is increasing as $N$ grows, we have
    $
    \lim_{N\to \infty}\mu(\cap_{n\ge N}F_{n,\sigma})=1.
    $
    Then there exists $n_0\in \N$ such that for any $n\ge n_0$, $\mu(F_{n, \sigma})>1-\frac{\delta}{2}$. Since $\Leb(\UU)\ge \frac{\epsilon}{4}$, we see that a $(n, \frac{\epsilon}{4})$-dynamical ball containing a point $x\in F_{n, \sigma}$ is entirely
contained in $\UU_x^n$, so its measure is at most $e^{-n(\hh_{\mu}(\UU)-\sigma)}$. For $n>n_0$, note that the $\mu$-measure of the intersection between the complement of $F_{n, \sigma}$ and any union of $(n, \frac{\epsilon}{4})$-dynamical balls in $\XX$ whose
measure larger than $\delta$ is smaller or equal to $\delta/2$. Thus
$$
N_{\mu}^{\delta}(n, \frac{\epsilon}{4})\ge \frac{\delta}{2} e^{n\hh_{\mu}(\UU)-n\sigma}, \forall n>n_0.
$$
It follows that $h_\mu^K(\frac{\epsilon}{4}, \delta)\ge \hh_\mu(\UU)-\sigma$. Since $\sigma$ is arbitrary, by Lemma \ref{lem:local and cover 2} we get 
$$h_\mu^K(\frac{\epsilon}{4}, \delta)\ge \hh_\mu(\UU)\ge \hh_\mu^{BK}(\epsilon).$$ 
This completes the proof.
\end{proof}

By Lemma \ref{lem:delta covering number} and the fact that limit \eqref{eq:Shapira} exists, it follows from the same proof of \eqref{eq:Katok and Shapira} that 
\begin{equation}\label{eq:lower}
        \hh_{\mu}^{K}(\epsilon, \delta) \le h_\mu^S(\UU) \le \hh_{\mu}^{K}(\frac{\epsilon}{4}, \delta).
    \end{equation}
Using \eqref{eq:lower} and the same proof of Theorem \ref{thm:main Katok}, we get the variational principle for metric mean dimension hold in terms of $\hh_\mu^K$. We omit the proof here and leave it to the readers to work out the details. 
\begin{prop}\label{prop:2}
    Let $(\XX, d, T)$ be a topological dynamical system. Then for every $\delta\in (0,1)$,
	$$\overline{\mdim}_M (\XX, d, T)=\limsup_{\epsilon\to 0} \frac{1}{\log \frac{1}{\epsilon}} \sup_{\mu \in \EE_T(\XX)} \hh_\mu^K (\epsilon, \delta),$$
	and
	$$\underline{\mdim}_M (\XX, d, T)=\liminf_{\epsilon\to 0} \frac{1}{\log \frac{1}{\epsilon}} \sup_{\mu \in \EE_T(\XX)} \hh_\mu^K (\epsilon, \delta). $$
\end{prop}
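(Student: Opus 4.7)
The plan is to mimic the argument of Theorem \ref{thm:main Katok} verbatim, with \eqref{eq:lower} playing the role that \eqref{eq:Katok and Shapira} played there. Fix $\epsilon>0$ and $\delta\in(0,1)$. First I would invoke Lemma \ref{lem:open cover} to produce a finite open cover $\UU$ of $\XX$ with $\diam(\UU)\le \epsilon$ and $\Leb(\UU)\ge \epsilon/4$. For any $\mu\in\EE_T(\XX)$, the inequality \eqref{eq:lower} gives
\[
\hh_\mu^K(\epsilon,\delta)\le h_\mu^S(\UU)\le \hh_\mu^K\!\left(\tfrac{\epsilon}{4},\delta\right).
\]

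Next I would take $\sup_{\mu\in\EE_T(\XX)}$ of both chains and plug in Theorem \ref{thm:variantional principa open cover shapira} to replace the middle quantity with $h_\top(\UU,T)$. Combining the upper half with the right-hand estimate of Lemma \ref{lem:entropy} yields
\[
\sup_{\mu\in\EE_T(\XX)} \hh_\mu^K(\epsilon,\delta)\le h_\top(\UU,T)\le S(\XX,\Leb(\UU))\le S(\XX,\tfrac{\epsilon}{4}),
\]
and combining the lower half with the left-hand estimate of Lemma \ref{lem:entropy} yields
\[
\sup_{\mu\in\EE_T(\XX)} \hh_\mu^K\!\left(\tfrac{\epsilon}{4},\delta\right)\ge h_\top(\UU,T)\ge S(\XX,3\diam(\UU))\ge S(\XX,3\epsilon).
\]
Substituting $\epsilon$ by $4\epsilon$ in the second chain produces the sandwich
\[
S(\XX,12\epsilon)\le \sup_{\mu\in\EE_T(\XX)} \hh_\mu^K(\epsilon,\delta)\le S(\XX,\tfrac{\epsilon}{4}),
\]
valid for every $\epsilon>0$ and every $\delta\in(0,1)$.

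Finally, dividing by $\log(1/\epsilon)$ and taking $\limsup_{\epsilon\to 0}$ (respectively $\liminf_{\epsilon\to 0}$) both sides converge to $\overline{\mdim}_M(\XX,d,T)$ (respectively $\underline{\mdim}_M(\XX,d,T)$) by definition of metric mean dimension, since $\log(1/(12\epsilon))$ and $\log(4/\epsilon)$ are both equivalent to $\log(1/\epsilon)$ as $\epsilon\to 0$. This proves both formulas.

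The only conceptual checkpoint is verifying \eqref{eq:lower} itself, but the author already indicates it follows from Lemma \ref{lem:delta covering number} together with the existence of the limit in \eqref{eq:Shapira}: one writes $\NN_\mu(\UU^n,\delta)$ both as an upper bound for $\widetilde{N}_\mu^\delta(n,\epsilon)$ and as a lower bound for $N_\mu^\delta(n,\epsilon/4)$, takes $\liminf_{n\to\infty}\frac{1}{n}\log$, and uses that the middle sequence converges so that $\liminf$ equals the honest Shapira limit $h_\mu^S(\UU)$. No genuine obstacle arises; the proof is a purely formal repackaging of the Theorem \ref{thm:main Katok} argument with $\limsup$ replaced by $\liminf$ in the relevant definition.
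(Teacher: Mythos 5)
Your proposal is correct and is exactly the argument the paper intends: the author explicitly omits the proof of this proposition, stating that it follows from \eqref{eq:lower} by repeating the proof of Theorem \ref{thm:main Katok}, which is precisely what you carry out (including the correct justification of \eqref{eq:lower} via Lemma \ref{lem:delta covering number}, the inequality \eqref{eq：delta}, and the existence of the limit defining $h_\mu^S(\UU)$).
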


We end up this section with open problems as follows.

\text{\bf Problem 1}: Does the variational principle for metric mean dimension hold in terms of $\hh_\mu^K$?

\text{\bf Problem 2}: Is $\hh_\mu^{BK}$  bounded from below by $\hh_\mu^{K}$? That is to say, does there exist a constant $c$ such that for every $\delta\in (0,1)$ and $\epsilon>0$, one has that
	$
	\hh_\mu^K(c\epsilon, \delta)\le \hh_\mu^{BK}(\epsilon), \forall \mu\in \EE_T(\XX)?$

\vspace{5pt}

We remark that the affirmative answer to  Problem 2 will give a positive answer to Problem 1.

\section{On local entropy function}\label{sec:Theorem II}

In this section, we show that the metric mean dimension is related to the local  entropy function. Tsukamoto \cite[Lemma 2.5]{tsukamoto2018mean} showed a formula of metric mean dimension in terms of local quantity. We develop an alternative formula in terms of local entropy function.

\begin{thm}
	Let $(\XX, d, T)$ be a topological dynamical system. Then
	$$\overline{\mdim}_M (\XX, d, T)=\limsup_{\epsilon\to 0} \frac{1}{\log \frac{1}{\epsilon}} \sup_{x\in \XX} h_d (x,\epsilon). $$
\end{thm}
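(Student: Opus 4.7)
The plan is to prove the pointwise identity
$$
S(d, T, \XX, \epsilon) \;=\; \sup_{x \in \XX} h_d(x, \epsilon) \qquad \text{for every } \epsilon > 0,
$$
and then divide by $\log \tfrac{1}{\epsilon}$ and take $\limsup_{\epsilon \to 0}$. Because $\XX$ itself is a closed neighborhood of every point, taking $K = \XX$ in the definition of $h_d(x,\epsilon)$ instantly yields $h_d(x,\epsilon) \le S(d,T,\XX,\epsilon)$ for every $x$, hence the trivial bound $\sup_x h_d(x,\epsilon) \le S(d,T,\XX,\epsilon)$.

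For the reverse inequality, fix $\epsilon > 0$ and $\sigma > 0$, and set $A = \sup_{x \in \XX} h_d(x,\epsilon)$. For each $x \in \XX$, pick a closed neighborhood $K_x$ with $S(K_x,\epsilon) < h_d(x,\epsilon) + \sigma \le A + \sigma$. By compactness of $\XX$ the interiors of finitely many of these, say $K_{x_1},\dots,K_{x_m}$, cover $\XX$. Any $(n,\epsilon)$-separated subset $E \subset \XX$ partitions as $E = \bigsqcup_{i=1}^m E_i$ with $E_i \subset K_{x_i}$, and each $E_i$ is $(n,\epsilon)$-separated inside $K_{x_i}$, so
$$
s_n(\XX, \epsilon) \;\le\; \sum_{i=1}^m s_n(K_{x_i}, \epsilon) \;\le\; m \cdot \max_{1 \le i \le m} s_n(K_{x_i}, \epsilon).
$$
Taking $\tfrac{1}{n}\log$ and then $\limsup_n$, the factor $m$ disappears and, since the maximum is over finitely many sequences, $\limsup_n$ commutes with $\max_i$, giving
$$
S(d, T, \XX, \epsilon) \;\le\; \max_{1 \le i \le m} S(K_{x_i}, \epsilon) \;\le\; A + \sigma.
$$
Letting $\sigma \to 0$ closes the identity.

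The main technical point to watch is the interchange $\limsup_n \max_i = \max_i \limsup_n$ for a finite family, which is what lets the additive $\log m$ term be absorbed and what reduces the bound to a single closed neighborhood. Everything else is elementary: the rest of the argument is simply dividing the identity $S(d,T,\XX,\epsilon) = \sup_x h_d(x,\epsilon)$ by $\log \tfrac{1}{\epsilon}$ and invoking the definition of $\overline{\mdim}_M(\XX,d,T)$ given in the preliminaries.
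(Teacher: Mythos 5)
Your proof is correct, and it takes a genuinely different route from the paper. The paper deduces the upper bound for $S(\XX,\epsilon)$ from a separate lemma asserting $\sup_{x\in K}\tilde h_d(x,\epsilon)\ge R(K,\epsilon)$, proved by a nested-balls construction: one repeatedly covers by closed balls of shrinking diameter, at each stage selecting a piece that retains the full spanning exponent $R$, and extracts a single point $\bar x$ all of whose closed neighborhoods satisfy $R(K',\epsilon)\ge R(\XX,\epsilon)$; combined with $h_d\ge\tilde h_d$ and $R(\XX,\epsilon)\le S(\XX,\epsilon)\le R(\XX,\epsilon/2)$ this sandwiches $\sup_x h_d(x,\epsilon)$ between $R(\XX,\epsilon)$ and $S(\XX,\epsilon)$, which suffices after dividing by $\log\frac1\epsilon$. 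You instead work directly with separated sets: choose for each $x$ a $\sigma$-optimal closed neighborhood, extract a finite subcover of the interiors by compactness, split any $(n,\epsilon)$-separated set among the pieces, and use the finite-family interchange $\limsup_n\max_i=\max_i\limsup_n$ to absorb the multiplicity. Your version is shorter, avoids the detour through $R$ and $\tilde h_d$ entirely, and yields the stronger scale-by-scale identity $S(\XX,\epsilon)=\sup_{x\in\XX}h_d(x,\epsilon)$ for every $\epsilon>0$, from which both the $\limsup$ and the $\liminf$ statements follow immediately. What the paper's nested-ball argument buys in exchange is a slightly more constructive conclusion: it exhibits a specific point $\bar x$ whose every closed neighborhood already carries the full spanning exponent of $K$, a localization statement of independent interest in the spirit of Ye--Zhang. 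You correctly flag the $\limsup$/$\max$ interchange as the one point needing care; note that the paper's lemma relies on the same interchange implicitly when it asserts that some piece $B_{j_k}^k\cap K$ of each finite cover satisfies $R(B_{j_k}^k\cap K,\epsilon)=R(K,\epsilon)$.
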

\begin{proof}
	Since $h_d(x,\epsilon)\le S(\XX, \epsilon)$ for all $x\in \XX$, it is obvious that LHS $\ge$ RHS. The other inequality follows from Lemma \ref{lem:local entropy} and $h_d(x,\epsilon)\ge \tilde{h}_d(x,\epsilon)$.
\end{proof}
By same argument, we also have that $$\underline{\mdim}_M (\XX, d, T)=\liminf_{\epsilon\to 0} \frac{1}{\log \frac{1}{\epsilon}} \sup_{x\in \XX} h_d (x,\epsilon). $$ 
\begin{lem}\label{lem:local entropy}
	Let $K$ be a closed susbet of $X$. Then $\sup_{x\in K}\tilde{h}_d(x, \epsilon)\ge R(K, \epsilon)$.
\end{lem}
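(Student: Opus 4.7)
The plan is to prove the contrapositive in the following form: if $\sup_{x\in K}\tilde{h}_d(x,\epsilon) < a$ for some real number $a$, then $R(K,\epsilon) \le a$. Choosing $a$ to approach $R(K,\epsilon)$ from above (or letting $a$ be arbitrarily large when $R(K,\epsilon)=\infty$) will then give the desired inequality.

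Fix such an $a$. For every $x\in K$, the definition of $\tilde{h}_d(x,\epsilon)$ furnishes a closed neighborhood $V_x$ of $x$ with $R(V_x,\epsilon)<a$. The interiors $\mathrm{Int}(V_x)$ form an open cover of the compact set $K$, so I extract a finite subcover and obtain closed sets $V_{x_1},\dots,V_{x_N}$ whose union still contains $K$, each satisfying $R(V_{x_i},\epsilon)<a$.

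The central observation is a subadditivity estimate for spanning numbers. If $F_i\subset\XX$ is an $(n,\epsilon)$-spanning set of $V_{x_i}$, then $\bigcup_{i=1}^N F_i$ is $(n,\epsilon)$-spanning for $\bigcup_{i=1}^N V_{x_i}\supset K$, so
\[
r_n(K,\epsilon)\;\le\;\sum_{i=1}^N r_n(V_{x_i},\epsilon)\;\le\;N\max_{1\le i\le N} r_n(V_{x_i},\epsilon).
\]
Taking $\tfrac{1}{n}\log$, the $\tfrac{\log N}{n}$ term vanishes in the limsup, and since the maximum over a fixed finite collection commutes with $\limsup$, I conclude
\[
R(K,\epsilon)\;\le\;\max_{1\le i\le N} R(V_{x_i},\epsilon)\;<\;a.
\]
This completes the argument by letting $a\downarrow \sup_{x\in K}\tilde{h}_d(x,\epsilon)$ (and, if that supremum is finite but $R(K,\epsilon)$ is not, by letting $a\to\infty$ to produce a direct contradiction).

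There is no real obstacle here; the only subtlety is the interchange of $\limsup$ with the finite maximum, which is a standard elementary fact (valid precisely because the index set is finite — compactness of $K$ is what forces finiteness). The spanning-set definition in the paper allows the covering points to lie anywhere in $\XX$, not necessarily in $K$ itself, so the union-of-spanning-sets argument goes through without any modification.
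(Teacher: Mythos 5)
Your proof is correct, and it takes a genuinely different route from the paper's. The paper localizes: it covers $K$ by closed balls of diameter $\le 1/k$ and, using the fact that $R(\cdot,\epsilon)$ of a finite union equals the maximum over the pieces, repeatedly selects a ball whose intersection with $K$ still carries the full value $R(K,\epsilon)$; shrinking the diameters produces a nested sequence converging to a single point $\bar x$, every closed neighborhood of which contains one of these sets, whence $\tilde h_d(\bar x,\epsilon)\ge R(K,\epsilon)$. You instead argue the contrapositive globally: if every $x\in K$ has a closed neighborhood $V_x$ with $R(V_x,\epsilon)<a$, compactness of $K$ (a closed subset of the compact space $\XX$) yields a finite subcover, and the estimate $r_n(K,\epsilon)\le N\max_i r_n(V_{x_i},\epsilon)$ together with the interchange of $\limsup$ and a finite maximum gives $R(K,\epsilon)<a$. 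Both arguments ultimately rest on the same elementary fact --- that $R$ of a finite union is the maximum of the $R$'s, i.e.\ $\limsup$ commutes with a finite max --- but your version is shorter and sidesteps the bookkeeping of the nested intersection (in particular the paper's tacit claim that the chosen balls can be taken nested and that $\bigcap_k B_{j_k}^k$ is a nonempty singleton in $K$), while the paper's version yields slightly more information, namely an explicit point $\bar x\in K$ at which the local quantity attains the bound. Your remarks on the two delicate points --- that spanning sets are allowed to lie outside the set being spanned, and that finiteness of the index set is what justifies the $\limsup$--$\max$ interchange --- are exactly the right ones.
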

\begin{proof}
	Let $\{B_1^1, B_2^1, \dots, B_{n_1}^1 \}$ be a cover of $K$ consisting of closed balls with diameter at most $1$. Then there exists $j_1$ such that
	$$
	R(K, \epsilon)=R(B_{j_1}^1\cap K, \epsilon). 
	$$
	Cover $B_{j_1}^1\cap K$ by closed balls $B_1^2, B_2^2, \dots, B_{n_2}^2$ with diameter at most $\frac{1}{2}$. Then there exists $j_2$ such that
	$$
	R(K, \epsilon)=R(B_{j_2}^1\cap K, \epsilon). 
	$$
	By induction, for every $k\ge 2$, there  exists a closed ball $B_{j_k}^k$ with diameter at most $\frac{1}{k}$ such that 
	$$
	R(K, \epsilon)=R(B_{j_k}^k\cap K, \epsilon). 
	$$
	Let $\bar{x}=\cap_{k\in \N} B_{j_k}^k$ (which is equal to $\cap_{k\in \N} (B_{j_k}^k\cap K)$ by above construction). For any closed neighborhood $K'$ of $\bar{x}$, we can find sufficiently large $k\in \N$ such that $B_{j_k}^k\cap K\subset K'$, which implies that
	$$
	R(K',\epsilon)\ge R(B_{j_k}^k\cap K)=R(K,\epsilon),
	$$ 
	that is, $h_d(\bar{x}, \epsilon)\ge R(K,\epsilon)$. This completes the proof. 
	
\end{proof}

\section*{Acknowledgement} 
We thank Adam {\'S}piewak for pointing out a mistake  on an earlier draft. %We would like to thank Bingbing Liang for showing us the validation of Lemma \ref{lem:3}. 
We are grateful to Yonatan Gutman for valuable remarks.

\bibliographystyle{alpha}
\bibliography{universal_bib}

\end{document}